\date{\today}
\DeclareMathOperator{\Hom}{Hom}
\def\lotimes{\@ifnextchar_{\@lotimessub}{\@lotimesnosub}}
\def\@lotimessub_#1{\mathchoice{\mathbin{\mathop{\otimes}^L}_{#1}}%
  {\otimes^L_{#1}}{\otimes^L_{#1}}{\otimes^L_{#1}}}
\def\@lotimesnosub{\mathbin{\mathop{\otimes}^L}}
\numberwithin{equation}{section}
\newtheorem{thm}{Theorem}[section]
\newtheorem{lem}[thm]{Lemma}
\newtheorem{cor}[thm]{Corollary}
\theoremstyle{definition}
\newtheorem{defn}[thm]{Definition}
\theoremstyle{remark}
\numberwithin{figure}{section}
\numberwithin{table}{section}
\title{A Hopf algebra from preprojective modules}
\author{Pak-Hin Li}
\email{pl473@cornell.edu}
\begin{document}

\begin{abstract}
Let $Q$ be a finite type quiver i.e. ADE Dynkin quiver.
Denote by $\Lambda$ its preprojective algebra. It is known that there are finitely many indecomposable $\Lambda$-modules if and only if $Q$ is of type $A_1,A_2,A_3,A_4$.
In this paper, extending Lusztig's construction of $U\frak{n}_+$, we study an algebra generated by these indecomposable submodules. 
It turns out that it forms the universal enveloping algebra of some nilpotent Lie algebra inside the function algebra on Lusztig's nilpotent scheme. 
The defining relations of the corresponding nilpotent Lie algebra for type $A_1, A_2,A_3,A_4$ are given here.

\end{abstract}

\maketitle

\section{Introduction}
\label{s:intro}
In this paper, we will follow the notation in \cite{GLS05} closely and the quiver we consider will always be of type $A_n$ where $n\in\{1,2,3,4\}$. 
\subsection{Lusztig's nilpotent scheme}
Given a simply laced $A,D,E$ type quiver, we denote the vertex set by $I$. 
Let $\bar{Q}$ be the quiver obtained from the Dynkin diagram of $\mathfrak{g}$ by replacing each edge by a pair of opposite arrows $(a,a^*)$. 
\textbf{The preprojective algebra} $\Lambda$ is defined as the quotient of the path algebra $\mathbb{C}\bar{Q}$ by the two-sided ideal generated by $\sum(aa^*-a^*a)$ where the sum is over all pairs of arrows with opposite direction.
Given a finite dimensional vector space $V$ with graded dimension $|V|$ (associate an integer at each vertex of the Dynkin quiver), we can construct \textbf{ Lusztig's nilpotent scheme} $\Lambda_V$ which is defined as a subscheme of $\prod_{e\in \text{edges}}\Hom(V_{e_{\text{tail}}},V_{e_{\text{head}}}) $ by these preprojective relations. 
For type $A_n$, these relations are as follows.
For a $I$-graded vector space $V$ with dimension vector $|V|=(d_1,d_2,...,d_n)$, we can define $\Lambda_V=\{((f_i)_{i=1,2.,,n-1},(q_i)_{i=1,2,...,n-1})\in \prod_{i=1}^{n-1}\Hom(\mathbb{C}^{d_i},\mathbb{C}^{d_{i+1}}) \times \prod_{i=1}^{n-1}\Hom(\mathbb{C}^{d_{i+1}},\mathbb{C}^{d_{i}}) : f_iq_i=q_{i-1}f_{i-1},i=1,2,...,n   \}$, where the maps are zero if the index is not in $\{1,2...,n-1\}$. 
There is a group $G_V:=\prod_{i\in I} GL(V_i)$ acting on $\Lambda_V$ as follows. 
Let $x=(x_\alpha)_{\alpha\in \text{edge} }\in \Lambda_V$ and $g=(g_i)_{i\in I}\in G_V$. 
Then $g\cdot x= (y_\alpha)_{\alpha \in \text{edge}}$ is given by:
$y_\alpha= g_{\text{tail}(\alpha)}x_{\alpha} g_{\text{head}(\alpha)}^{-1}$.

\subsection{The function algebra on Lusztig's nilpotent scheme}
Define $M(\Lambda_V)$ to be the space of all constructible functions on $\Lambda_V$ and denote by $M(\Lambda_V)^{G_V}$ the $G_V$-invariant functions.
Let $\widetilde{\mathcal{M}}=\oplus_{|V|\in {{\mathbb{N}^I}}} M(\Lambda_V)^{G_V}$, where $\mathbb{N}$ denotes the set of nonnegative integers.
One can define an algebra structure $(\widetilde{\mathcal{M}},*)$ on $\widetilde{\mathcal{M}}$ as follows.
If $V_1,V_2,V_3$ are $I$-vector spaces and $|V_1|+|V_2|=|V_3|$, we define $*:M(\Lambda_{V_1})^{G_{V_1}}\times M(\Lambda_{V_2})^{G_{V_2}}\rightarrow M(\Lambda_{V_3})^{G_{V_3}}$ by $1_{O_1}* 1_{O_2}(x):=\chi(\Phi_{O_1,O_2,x})$ where $$\Phi_{O_1,O_2,x}=\{U\in Gr(|V_2|,V_3): U \text{ is a $\Lambda$-submodule of $x$ of type $O_2$ }, x/U \text{ has type $O_1$}  \}$$
and $\chi$ is compactly supported Euler characteristic.
When the quiver is of type $A_1,A_2,A_3,A_4$ and only then, the number of indecomposable modules is finite and as a result $M(\Lambda_v)^{G_v}$ is finite dimensional.
From here, the \underline{compactly supported} Euler characteristic of a set $A$ is denoted by $\chi(A)$.

\subsection{The coproduct structure on the function algebra $\widetilde{\mathcal{M}}$}
\label{coproduct}
In \cite{GLS05}, given $V',V'',V$ such that $|V|=|V'|+|V''|$, there is defined a map $\text{Res}_{V',V''}^{V}: M(\Lambda_V)^{G_V}\rightarrow M(\Lambda_{V'}\times \Lambda_{V''})^{G_{V'}\times G_{V''}}$ as follows: $$\text{Res}_{V',V''}^{V}(f)(x',x'')=f(x'\oplus x''),$$ 
where $(x',x'') \in \Lambda_{V'}\times \Lambda_{V''}$.
It is proven in \cite{GLS05} that the map $\text{Res}_{V',V''}^{V}$  descends to  $M(\Lambda_{V'})^{G_{V'}} \otimes M( \Lambda_{V''})^{G_{V''}}$ when restricted to the algebra generated by $\{e_i=1_{Z[i]}: i \in I\}$ (where $Z[i]$ is the $\Lambda$-module the dimension vector of which is Kronecker delta $(\delta_{ij})_{j\in I}$) and it agrees with the coproduct structure of $U\mathfrak{n}_{+}$.  
In the next section, it will be shown that $\text{Res}_{V',V''}^{V}$ can actually give a coproduct structure to the larger subalgebra $\mathcal{A}$ generated by indecomposable modules.

\subsection{Acknowledgements}
I would like to thank my advisor Allen Knutson for suggestions and ideas of this project. The calculation of Lie algebra cohomology, lower and upper central series were done using Maple.

\section{Main result}
\label{sec:statement}
Hereafter, $Q$ is one of $A_1,A_2,A_3,A_4$.
Define $\mathcal{A}$ to be the subalgebra of $\widetilde{\mathcal{M}}$ which is generated by the characteristic functions $\{1_K: \text{ $K$ is an orbit of an indecomposable module}\}$. 
We will see later in this section that actually $\mathcal{A}$ is equal to $\widetilde{\mathcal{M}}$.

\subsection{Coproduct structure on $\mathcal{A}$}
\label{ss:notation}
In this subsection, we prove that $Res$ gives us a coproduct structure on $\mathcal{A}$ and the coproduct is multiplicative, so $\mathcal{A}$ is necessarily the universal enveloping algebra of its primitive elements.  

\begin{lem}
\label{basic_euler}
Let $f=1_{K_1}*1_{K_2}*...*1_{K_n} $ where $K_i$ are $G_{V_i}$-invariant subsets of $\Lambda_{V_i}$. Then $f(x)=\chi(\{(V^0\supseteq V^1\supseteq ...\supseteq V^n=0):x(V^i)\subset V^i, [V^{i-1}/V^i]\in K_i, \text{for $i=1,2,...,n$ }\})$.
\end{lem}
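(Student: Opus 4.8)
The plan is to argue by induction on $n$, with the inductive step resting on two facts: the associativity of $*$ (the algebra structure on $\widetilde{\mathcal M}$, hence its associativity, is from \cite{GLS05}) and the multiplicativity of the compactly supported Euler characteristic in fibrations. Two reductions simplify the bookkeeping. First, because $Q$ has finite type each $\Lambda_{V_i}$ has only finitely many $G_{V_i}$-orbits, so every $G_{V_i}$-invariant constructible set $K_i$ is a finite disjoint union of orbits; since both sides of the asserted identity are additive in each $K_i$ — the left by bilinearity of $*$, the right because the indexing flag set decomposes into a disjoint union whenever some $K_i$ does — we may assume each $K_i=O_i$ is a single $G_{V_i}$-orbit. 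Second, for any $G_W$-invariant constructible function $h$ on a space $\Lambda_W$ we will use the expansion $h=\sum_P h(p_P)\,1_P$ over the $G_W$-orbits $P$ (choosing $p_P\in P$); with bilinearity of $*$ and the defining formula $1_O*1_P(x)=\chi(\Phi_{O,P,x})$, this evaluates convolutions $1_{O_1}*h$.

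The base case $n=1$ is immediate: for $f=1_{O_1}$ with $O_1$ an orbit in $\Lambda_V$, the only admissible flag is $V=V^0\supseteq V^1=0$ (comparing dimension vectors forces $V^0=V$), it is admissible iff $x$ has type $O_1$, and so the Euler characteristic equals $1_{O_1}(x)=f(x)$.

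For the inductive step, fix $x\in\Lambda_V$ with $|V|=|V_1|+\cdots+|V_n|$, write $f=1_{O_1}*h$ with $h=1_{O_2}*\cdots*1_{O_n}$, and let $\mathcal F_x$ denote the flag set on the right-hand side of the statement for the type sequence $(O_1,\dots,O_n)$. I would then study the morphism $\pi\colon\mathcal F_x\to\mathcal B_x$, $(V^0\supseteq\cdots\supseteq V^n)\mapsto V^1$, where $\mathcal B_x=\{\,W\subseteq V : x(W)\subseteq W,\ x/W\ \text{has type}\ O_1\,\}$. Both sets are constructible and $\pi$ is algebraic, so stratifying $\mathcal B_x$ into locally closed pieces over which $\pi$ is Zariski-locally trivial, and using additivity and multiplicativity of $\chi$, yields $\chi(\mathcal F_x)=\int_{W\in\mathcal B_x}\chi(\pi^{-1}(W))\,d\chi(W)$. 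The fiber $\pi^{-1}(W)$ is precisely the analogous flag set for the submodule $x|_W$ with types $O_2,\dots,O_n$ — a flag of $x$-submodules between $W$ and $0$ is the same as a flag of $(x|_W)$-submodules, with identical subquotients — and this set depends only on the isomorphism class of the $\Lambda$-module $x|_W$, so the induction hypothesis gives $\chi(\pi^{-1}(W))=h(x|_W)$; hence $\chi(\mathcal F_x)=\int_{W\in\mathcal B_x}h(x|_W)\,d\chi(W)$. On the other hand, writing $h=\sum_P c_P 1_P$ as in the second reduction, $f(x)=(1_{O_1}*h)(x)=\sum_P c_P\,\chi(\Phi_{O_1,P,x})=\sum_P c_P\,\chi\{\,W : x(W)\subseteq W,\ x/W\ \text{of type}\ O_1,\ x|_W\ \text{of type}\ P\,\}$, and partitioning $\mathcal B_x$ by the isomorphism type $P$ of $x|_W$ — on which piece $h(x|_W)=c_P$ is constant — converts the previous integral into exactly this sum. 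This gives $\chi(\mathcal F_x)=f(x)$ and closes the induction.

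I expect the only non-formal ingredient to be the fibration property of $\chi$ together with the verification that $\pi$ and the strata of $\mathcal B_x$ are genuinely algebraic and constructible — the strata being the intersections of $\mathcal B_x$ with the locally closed loci $\{W : x|_W\ \text{of type}\ P\}$, which is routine; the remainder of the argument is organizational, using only associativity and bilinearity of the convolution.
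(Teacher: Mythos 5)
Your argument is correct, and it is the standard unwinding of the iterated convolution: induction on $n$ via the projection onto $V^1$, the fibration/integration property of $\chi_c$, and reduction to single orbits. The paper itself offers no details here — its proof is the one-line assertion that the claim follows from the definition of $*$ — so your write-up simply supplies the bookkeeping (associativity, bilinearity, and $\chi(\mathcal F_x)=\int_{W\in\mathcal B_x}\chi(\pi^{-1}(W))\,d\chi$) that the paper leaves implicit; there is no divergence in approach.
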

\begin{proof}
This follows directly from the definition of product structure on $\widetilde{\mathcal{M}}$.
\end{proof}

\begin{defn}
\label{important}
Let $U_1,...,U_r$ be the different isomorphism classes of indecomposable $\Lambda$-module. 
Each function in $M(\Lambda_V)^{G_V}$ can be expressed as a linear combination of indicator functions $1_{a_1U_1\oplus...\oplus a_r U_r}$. 
Let $Y\subset \Lambda_V$ be the $G_V$-orbit of $a_1U_1\oplus...\oplus a_r U_r$. 
We define the indicator functions $1_Y$ and $1_{U_i}$ to be $I(a_1,...,a_r)$ and $I_i$ respectively.
We also let $a=(a_1,...,a_r)$ and denote $I(a_1,...,a_r)$ by $I(a)$.
\end{defn}

\begin{lem}
For $Q=A_1,A_2,A_3,A_4$, the map $\text{Res}_{V',V''}^{V}: M(\Lambda_V)^{G_V}\rightarrow M(\Lambda_{V'}\times \Lambda_{V''})^{G_{V'}\times G_{V''}}$ factors through $M(\Lambda_{V'})^{G_{V'}} \otimes M(\Lambda_{V''})^{G_{V''}}$. 
Hence the map $Res$ induces a map $\Delta:\widetilde{\mathcal{M}}\rightarrow \widetilde{\mathcal{M}}\otimes \widetilde{\mathcal{M}}$.
\end{lem}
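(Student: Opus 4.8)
The plan is to reduce to the explicit spanning set of $M(\Lambda_V)^{G_V}$ provided by Definition \ref{important} and to compute $\text{Res}$ directly on it. Since every element of $M(\Lambda_V)^{G_V}$ is a linear combination of the orbit indicators $I(a)$ (with $a=(a_1,\dots,a_r)\in\mathbb{N}^r$ and $I(a)=1_Y$ for $Y$ the $G_V$-orbit of $a_1U_1\oplus\cdots\oplus a_rU_r$), and $\text{Res}_{V',V''}^{V}$ is linear, it is enough to show that $\text{Res}_{V',V''}^{V}(I(a))$ lies in $M(\Lambda_{V'})^{G_{V'}}\otimes M(\Lambda_{V''})^{G_{V''}}$.

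By definition $\text{Res}_{V',V''}^{V}(I(a))(x',x'')=I(a)(x'\oplus x'')$, which is $1$ when $x'\oplus x''\cong a_1U_1\oplus\cdots\oplus a_rU_r$ and $0$ otherwise. As $\Lambda$ has finite dimension, the category of finite-dimensional $\Lambda$-modules is Krull--Schmidt, so $x'\oplus x''\cong\bigoplus_i a_iU_i$ holds exactly when there exist $b,c\in\mathbb{N}^r$ with $b+c=a$, $x'\cong\bigoplus_i b_iU_i$, and $x''\cong\bigoplus_i c_iU_i$; moreover $b$ is then forced to satisfy $\sum_i b_i|U_i|=|V'|$ because $x'\in\Lambda_{V'}$, and $c=a-b$ automatically satisfies $\sum_i c_i|U_i|=|V''|$. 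Hence
\[
\text{Res}_{V',V''}^{V}(I(a))=\sum_{\substack{b+c=a\\ \sum_i b_i|U_i|=|V'|}} I(b)\otimes I(c),
\]
with $I(b)\in M(\Lambda_{V'})^{G_{V'}}$ and $I(c)\in M(\Lambda_{V''})^{G_{V''}}$. There are finitely many indecomposables, hence finitely many $b\le a$, so the right-hand side is a genuine element of $M(\Lambda_{V'})^{G_{V'}}\otimes M(\Lambda_{V''})^{G_{V''}}$; this is the asserted factorization.

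To assemble $\Delta$ one sets, for $f\in M(\Lambda_V)^{G_V}$, the $(|V'|,|V''|)$-component of $\Delta(f)$ equal to $\text{Res}_{V',V''}^{V}(f)$ when $|V'|+|V''|=|V|$ and $0$ otherwise; there are only finitely many such decompositions of $|V|$ in $\mathbb{N}^I$, so $\Delta(f)\in\bigoplus_{|V'|+|V''|=|V|}M(\Lambda_{V'})^{G_{V'}}\otimes M(\Lambda_{V''})^{G_{V''}}\subseteq\widetilde{\mathcal{M}}\otimes\widetilde{\mathcal{M}}$, and extending additively over $|V|\in\mathbb{N}^I$ yields the map $\Delta\colon\widetilde{\mathcal{M}}\to\widetilde{\mathcal{M}}\otimes\widetilde{\mathcal{M}}$.

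The only non-formal input is the Krull--Schmidt step, together with the observation that $\text{Res}$ of an orbit indicator is again a \emph{finite} sum of terms $I(b)\otimes I(c)$; I expect this to be the main point, though it is short. It is worth noting --- and immediate in the type $A_{\le 4}$ case, since each $\Lambda_{V'}$ then has only finitely many $G_{V'}$-orbits --- that $M(\Lambda_{V'}\times\Lambda_{V''})^{G_{V'}\times G_{V''}}$ actually coincides with the algebraic tensor product $M(\Lambda_{V'})^{G_{V'}}\otimes M(\Lambda_{V''})^{G_{V''}}$; so the real value of the lemma is the explicit formula for $\Delta$ above, which is what will be needed to verify that $\Delta$ is an algebra map for $*$, hence that $\widetilde{\mathcal{M}}$ is a bialgebra and therefore the universal enveloping algebra of its primitive elements.
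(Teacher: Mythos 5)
Your proposal is correct and follows essentially the same route as the paper: both reduce to orbit indicator functions $I(a)$, invoke Krull--Schmidt to decompose $x'\oplus x''$, and obtain the formula $\text{Res}_{V',V''}^{V}(I(a))=\sum_{b+c=a}I(b)\otimes I(c)$ over the dimension-compatible splittings. Your additional remarks on finiteness and on the tensor product coinciding with $M(\Lambda_{V'}\times\Lambda_{V''})^{G_{V'}\times G_{V''}}$ are accurate but not needed beyond what the paper records.
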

\begin{proof}
Each function in $M(\Lambda_V)^{G_V}$ is a linear combination of $1_{K}$ where the $K$ are $G_V$-orbits of $\Lambda$-modules. 
Say that module is of the form $a_1V_1\oplus a_2 V_2...\oplus a_rV_r$ where $V_i$ are different isomorphism types of indecomposable module. 
Then by the Krull-Schmidt theorem, every $\Lambda$-module has a unique decomposition into indecomposable modules, so we get:
\begin{align*}
Res_{V',V''}^{V}(1_K)(x'\oplus x'')=\sum_{{}^{\hspace{0.8cm}(b_1,b_2,...,b_r):}_{b_1v_1+...+b_rv_r=|V'|,b_i\le a_i}} 1_{b_1V_1\oplus...\oplus b_rV_r}(x') 1_{(a_1-b_1)V_1\oplus...\oplus (a_r-b_r)V_r}(x'')
\end{align*}
Hence, $Res$ induces a map $\Delta: \widetilde{\mathcal{M}}\rightarrow \widetilde{\mathcal{M}}\otimes \widetilde{\mathcal{M}}$.
\end{proof}

\begin{lem}
\label{fixedpoint}
Let $X\subset Y$ be quasiprojective and $T$ be a torus acting on $Y$ algebraically and preserving $X$ where $Y$ is a partial flag variety. 
Then the compactly supported Euler characteristic of $X$ is equal to the compactly supported Euler characteristic of the $T$-fixed points in $X$.
\end{lem}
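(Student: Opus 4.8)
The plan is to reduce the computation of the compactly supported Euler characteristic of $X$ to that of its $T$-fixed locus by exploiting the well-known additivity and multiplicativity properties of $\chi$, together with the structure of torus actions on quasiprojective varieties. The key input is the classical fact (due to Bialynicki-Birula, and for Euler characteristics going back to work exploited by Lusztig in this very context) that for an algebraic torus $T$ acting on a quasiprojective variety $X$, one has $\chi(X) = \chi(X^T)$.

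\begin{proof}
Since $Y$ is a partial flag variety, it is a smooth projective variety with an algebraic $T$-action, and $X \subset Y$ is a $T$-stable quasiprojective subvariety. We use the following two properties of compactly supported Euler characteristic: (i) if $X = \bigsqcup_j X_j$ is a finite decomposition into locally closed $T$-stable subvarieties, then $\chi(X) = \sum_j \chi(X_j)$; and (ii) if $\pi: X \to B$ is a $T$-equivariant morphism (for trivial $T$-action on $B$) all of whose fibers have Euler characteristic $c$, then $\chi(X) = c\cdot\chi(B)$.

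First I would reduce to the case where $T = \mathbb{G}_m$ is one-dimensional: choose a generic one-parameter subgroup $\lambda: \mathbb{G}_m \to T$ so that $X^{\lambda} = X^T$, which is possible because $X^T$ is cut out by finitely many weight conditions coming from the finitely many $T$-weights appearing in the tangent spaces along $Y$. Next, for the $\mathbb{G}_m$-action, I would use the Bialynicki-Birula–type stratification: every point $x\in X$ has a well-defined limit $\lim_{t\to 0}\lambda(t)\cdot x$ lying in $X$, because $X$ is quasiprojective inside the projective variety $Y$ and the limit a priori lies in $Y$; one then checks it actually lies in $X$ using $T$-stability of $X$ together with the fact that $X$ is a finite union of locally closed pieces each of which is a union of attracting cells. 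This partitions $X$ into locally closed strata $X_\alpha$, indexed by connected components $F_\alpha$ of $X^{\mathbb{G}_m} = X^T$, where $X_\alpha = \{x : \lim_{t\to 0}\lambda(t)x \in F_\alpha\}$, and the map $x \mapsto \lim_{t\to0}\lambda(t)x$ gives a $\mathbb{G}_m$-equivariant affine-space-bundle-like morphism $X_\alpha \to F_\alpha$ whose fibers are isomorphic to affine spaces (hence have Euler characteristic $1$). Applying (i) and (ii) then yields $\chi(X) = \sum_\alpha \chi(X_\alpha) = \sum_\alpha \chi(F_\alpha) = \chi(X^T)$.

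The main obstacle is verifying that the limit $\lim_{t\to0}\lambda(t)\cdot x$ stays inside $X$ for all $x \in X$, and that the resulting strata are genuinely fibered in affine spaces rather than merely having contractible fibers; for a general $T$-stable quasiprojective $X$ inside a flag variety this requires care, and one route is to first establish it on a $T$-stable projective completion or on a $T$-stable affine paving of $Y$ (partial flag varieties do admit such pavings by Schubert-type cells) and then intersect with $X$, using additivity of $\chi$ over the induced stratification of $X$. Alternatively, one can invoke the purely topological fact that for any algebraic $T$-action on a complex variety, $\chi(X) = \chi(X^T)$ holds because the action extends the structure of an $S^1$-action whose fixed point set agrees with $X^T$ and $S^1$-actions on finite CW pairs preserve Euler characteristic — this avoids the limit-point subtlety entirely, at the cost of a less algebraic argument. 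I would present the Bialynicki-Birula argument as the main proof and remark on the topological shortcut.
\end{proof}
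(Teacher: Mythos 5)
Your proposal is correct and follows essentially the same route as the paper, whose proof is a one-line citation of the Bia\l ynicki-Birula decomposition together with additivity of compactly supported Euler characteristic on locally closed strata; you have simply spelled out the details (reduction to a generic one-parameter subgroup, attracting cells fibered in affine spaces over fixed components, and summation over strata). Your explicit attention to the subtlety that limits must remain in the quasiprojective $X$ --- resolved by stratifying and using additivity of $\chi$ --- is exactly the point the paper's citation of ``additivity on locally closed sets'' is meant to cover.
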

\begin{proof}
This standard fact follows from the Bia\l ynicki-Birula decomposition \cite{BB73} and the additivity of compactly supported Euler characteristic on locally closed sets in an algebraic variety.
\end{proof}

We need a basic Lemma here.

\begin{lem}
\label{basic}
Let $V=\bigoplus_{i=1}^{N} V_i$ be the isotopic decomposition of a $T$-representation where $T=(\mathbb{C}^*)^{N}$. 
If $U$ is a $T$-invariant subspace of $V$, then $U=\oplus_{i=1}^{N} (U\cap V_i)$.
\end{lem}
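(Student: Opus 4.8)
The statement is a standard fact about decompositions of torus representations, so the proof should be quite short and concrete. The plan is to reduce to the case of a single irreducible character by extracting the component of $U$ in each isotypic summand via a suitable projection operator that lies in the image of the torus action, so that $U$ is automatically preserved.

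First I would fix notation: since $T = (\mathbb{C}^*)^N$ is an algebraic torus, every finite-dimensional $T$-representation $V$ decomposes into weight spaces $V = \bigoplus_{\lambda \in X^*(T)} V_\lambda$, and the isotypic decomposition $V = \bigoplus_{i=1}^N V_i$ in the statement is exactly this weight-space decomposition (grouping the possibly-repeated weights so that the $V_i$ carry distinct characters $\lambda_i$). For each $i$, let $p_i \colon V \to V_i$ be the associated projection along $\bigoplus_{j\ne i} V_j$. The key point is that $p_i$ can be written as a polynomial in the torus action: since the characters $\lambda_1,\dots,\lambda_N$ are pairwise distinct, one can choose a single element $t \in T$ whose eigenvalues $\lambda_1(t),\dots,\lambda_N(t)$ on the $V_i$ are pairwise distinct complex numbers (a generic $t$ works, as the set of $t$ failing this is a finite union of proper subtori), and then $p_i$ is the Lagrange-interpolation polynomial in the single operator $t\big|_V$ that is $1$ on the eigenvalue $\lambda_i(t)$ and $0$ on the others.

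Next, since $U$ is $T$-invariant it is in particular invariant under the operator $t|_V$, hence under every polynomial in $t|_V$, hence under each $p_i$. Therefore $p_i(U) \subseteq U \cap V_i$ for every $i$. Conversely $U \cap V_i \subseteq p_i(U)$ trivially (if $u \in U \cap V_i$ then $u = p_i(u) \in p_i(U)$), so $p_i(U) = U \cap V_i$. Finally, for any $u \in U$ we have $u = \sum_{i=1}^N p_i(u) \in \sum_{i=1}^N (U \cap V_i)$, and the sum $\sum_i (U\cap V_i)$ is direct because it sits inside the direct sum $\bigoplus_i V_i$; this gives $U \subseteq \bigoplus_{i=1}^N (U\cap V_i)$, and the reverse inclusion is obvious. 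Hence $U = \bigoplus_{i=1}^N (U \cap V_i)$.

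The only mildly delicate point is the existence of the single separating element $t \in T$ — i.e. that distinct characters of $T$ take distinct values at some common point — but this is immediate: $\lambda_i \ne \lambda_j$ means $\lambda_i \lambda_j^{-1}$ is a nontrivial character, whose kernel is a proper closed subgroup of $T$, and a torus over $\mathbb{C}$ is not a finite union of proper subtori, so a generic $t$ avoids all the finitely many kernels $\ker(\lambda_i\lambda_j^{-1})$ at once. Everything else is routine linear algebra, so I do not expect any real obstacle here; the lemma is being recorded only to be fed into the fixed-point and restriction arguments elsewhere in the paper.
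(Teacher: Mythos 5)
Your proof is correct. Note that the paper itself offers no proof of this lemma at all --- it is merely stated as ``a basic Lemma'' and then used in Lemma \ref{submodule} --- so there is no argument to compare yours against; your write-up simply supplies the standard justification the paper omits. The route you take (choose a single $t\in T$ separating the finitely many distinct characters, realize each isotypic projection $p_i$ as a Lagrange-interpolation polynomial in $t|_V$, and deduce $p_i(U)\subseteq U\cap V_i$, hence $U=\bigoplus_i(U\cap V_i)$) is complete and appropriate; in the paper's actual application the torus acts by scaling the summands independently, so the isotypic pieces are exactly the given summands and your hypothesis that the characters $\lambda_i$ are pairwise distinct is satisfied. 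The only cosmetic point worth flagging is that the statement uses $N$ both for the rank of $T$ and for the number of isotypic components; your proof does not actually need these to coincide, and it would read slightly more cleanly if you decoupled them.
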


\begin{lem}
\label{submodule}
Let $V=V_1\oplus V_2...\oplus V_t$, where $V_i$ are indecomposable $\Lambda$-modules (possibly with repetition).
Let $T=(\mathbb{C}^*)^{t}$ be the torus acting on $x$ by scaling each indecomposable summand. 
If $U$ is an indecomposable submodule in $x$ such that $U$ is $T$-invariant, then $U$ is a submodule of one of the $V_i$.
\end{lem}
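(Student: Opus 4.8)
By Lemma \ref{basic}, the $T$-invariant submodule $U$ decomposes as $U = \bigoplus_{i=1}^t (U \cap V_i)$ as a $T$-representation, hence also as a $\Lambda$-module since each $V_i$ is a $\Lambda$-submodule of $x$ and $U$ is a $\Lambda$-submodule, so each intersection $U \cap V_i$ is a $\Lambda$-submodule. Now $U$ is assumed indecomposable, so by the Krull--Schmidt theorem exactly one summand $U \cap V_i$ is nonzero and the rest vanish. That single nonzero summand equals $U$, and it sits inside $V_i$; this is precisely the claim.

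\begin{proof}
Since the $T$-action scales each indecomposable summand $V_i$ by an independent character, the isotypic decomposition of the $T$-representation $V$ is exactly $V = \bigoplus_{i=1}^t V_i$ (grouping together summands carrying the same character if there is repetition does not affect the argument; one may instead perturb to make all characters distinct, or apply Lemma \ref{basic} to the finer decomposition). By Lemma \ref{basic}, the $T$-invariant subspace $U$ satisfies
\[
U = \bigoplus_{i=1}^{t} (U \cap V_i).
\]
Each $V_i$ is a $\Lambda$-submodule of $x$, and $U$ is a $\Lambda$-submodule of $x$ by hypothesis, so each intersection $U \cap V_i$ is again a $\Lambda$-submodule; thus the displayed decomposition is a decomposition of $U$ as a $\Lambda$-module. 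Since $U$ is indecomposable, the Krull--Schmidt theorem forces all but one of the summands $U \cap V_i$ to be zero. Let $i_0$ be the index of the nonzero summand. Then $U = U \cap V_{i_0} \subseteq V_{i_0}$, so $U$ is a submodule of $V_{i_0}$.
\end{proof}

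The only genuinely delicate point, and the one worth flagging, is the passage from the vector-space decomposition of Lemma \ref{basic} to a $\Lambda$-module decomposition: one must check that $U \cap V_i$ is $\Lambda$-stable, which is immediate because it is an intersection of two $\Lambda$-submodules of $x$. A secondary subtlety is the hypothesis $T = (\mathbb{C}^*)^t$ when the $V_i$ repeat — then the scaling torus genuinely has rank $t$ and acts with one coordinate per summand, so Lemma \ref{basic} applies verbatim to the $N = t$ summands even though some "isotypic" pieces would coincide were one to use a smaller torus; using the full rank-$t$ torus is exactly what makes each $V_i$ its own isotypic component. No further obstacle arises: the indecomposability of $U$ together with Krull--Schmidt does the rest.
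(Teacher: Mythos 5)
Your proof is correct and follows essentially the same route as the paper: apply Lemma \ref{basic} to get $U=\bigoplus_i (U\cap V_i)$, note each $U\cap V_i$ is a $\Lambda$-submodule, and invoke indecomposability of $U$ to conclude only one summand survives. The extra care you take with repeated summands and with $\Lambda$-stability of the intersections only makes explicit what the paper leaves implicit.
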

\begin{proof}
Since $U$ is $T$-invariant, then by Lemma \ref{basic}, we must have $U=\oplus_{i=1}^r (U\cap V_i)$. 
$U$ is a $\Lambda$-module so $U\cap V_i$ is also a $\Lambda$-module. 
As a result, $U$ is a direct sum of $\Lambda$-modules. 
However, $U$ is indecomposable so $U$ is a submodule of one of the $V_i$.
\end{proof}


\begin{defn}
Given a sequence indecomposable modules $U_{j_1},U_{j_2},....,U_{j_t}$, define $d_{J}:=I_{j_1}*....*I_{{j_t}}$  where $J=(j_1,j_2,...,j_t)$. 
Given $c\in \{0,1\}^t$, we also define $d_{J,c}$ to be the product of $I_{j_i}$ where $c_i=1$. 

\end{defn}

We are now going to show that $\Delta: \mathcal{A} \to \mathcal{A} \otimes \mathcal{A}$ is a algebra homomorphism.
\begin{lem}
\label{key}Let $J=(j_1,...,j_N)$ be a sequence of indices of indecomposable modules.
Following section \ref{coproduct}, let $V=V'\oplus V''$ and $x',x''$ be $\Lambda$-modules with dimension vectors $|V'|,|V''|$ respectively.
We have $Res_{V',V''}^{V} d_{J}(x',x'')=\sum_{(c',c'')} d_{J,c'}(x')d_{J,c''}(x'') $ , where $(c',c'')$ runs through all pairs of sequences in $\{0,1\}^N\times\{0,1\}^N$ which satisfy $c_i'+c_i''=1$ for all $i=1,2,...,N$ , $\sum_{i=1}^{N}c_i'|U_{j_i}|=|V'|$ and $\sum_{i=1}^{N}c_i''|U_{j_i}|=|V''|$. 
\end{lem}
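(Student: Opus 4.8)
The plan is to combine Lemma~\ref{basic_euler}, which expresses $d_J(x',x'')$ as an Euler characteristic of a variety of flags of $\Lambda$-submodules $V = V^0 \supseteq V^1 \supseteq \cdots \supseteq V^N = 0$ with $[V^{i-1}/V^i] \cong U_{j_i}$, with the torus-localization technique of Lemmas~\ref{fixedpoint} and~\ref{submodule}. Since $x = x' \oplus x''$ is a direct sum of indecomposables, there is a large torus $T = (\mathbb{C}^*)^t$ (scaling each indecomposable summand of $x$) acting on this flag variety; by Lemma~\ref{fixedpoint} the Euler characteristic equals that of the $T$-fixed locus. The key point is to identify the $T$-fixed flags. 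I would first argue that a flag is $T$-fixed if and only if every $V^i$ is $T$-invariant, hence (by Lemma~\ref{basic}) a direct sum of its intersections with the indecomposable summands of $x$. Since each quotient $V^{i-1}/V^i$ is indecomposable, Lemma~\ref{submodule} (applied to $V^{i-1}/V^i \hookrightarrow x/V^i$, which is again a sum of indecomposables after the previous steps, or more directly to the successive subquotients) forces $V^{i-1}/V^i$ to ``live inside'' a single indecomposable summand of $x$, i.e.\ inside either $x'$ or $x''$.

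Next I would set up the bijection. A $T$-fixed flag in $x = x' \oplus x''$ decomposes, by Lemma~\ref{basic} applied to the coarser torus $(\mathbb{C}^*)^2$ scaling $x'$ and $x''$ separately, as $V^i = (V^i \cap x') \oplus (V^i \cap x'')$, giving a flag $(V^i \cap x')$ in $x'$ and a flag $(V^i \cap x'')$ in $x''$. At the $i$-th step, exactly one of the two relative dimension vectors $|V^{i-1} \cap x'| - |V^i \cap x'|$ and $|V^{i-1} \cap x''| - |V^i \cap x''|$ equals $|U_{j_i}|$ and the other is $0$, because $U_{j_i}$ is indecomposable and thus (by the previous paragraph) its copy sits in exactly one of $x', x''$. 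Record this choice as $c'_i = 1, c''_i = 0$ or vice versa; then $\sum c'_i |U_{j_i}| = |x'| = |V'|$ and $\sum c''_i |U_{j_i}| = |V''|$, and deleting the repeated steps from each side leaves a flag of $\Lambda$-submodules of $x'$ with successive quotients $U_{j_i}$ for $i$ with $c'_i = 1$, and similarly for $x''$. Conversely, any pair $(c', c'')$ with $c'_i + c''_i = 1$ together with a flag in $x'$ of type $d_{J,c'}$ and a flag in $x''$ of type $d_{J,c''}$ reassembles, by taking direct sums step by step, into a $T$-fixed flag in $x$ of type $d_J$. This is an isomorphism of varieties (in fact the $T$-fixed locus is the disjoint union over $(c',c'')$ of products of the two flag varieties), so taking compactly supported Euler characteristics and using its multiplicativity on products and additivity on disjoint unions yields
\[
Res_{V',V''}^{V} d_J(x',x'') \;=\; \sum_{(c',c'')} \chi\bigl(\Phi_{J,c'}(x')\bigr)\,\chi\bigl(\Phi_{J,c''}(x'')\bigr) \;=\; \sum_{(c',c'')} d_{J,c'}(x')\, d_{J,c''}(x''),
\]
where $\Phi_{J,c'}(x')$ is the flag variety computing $d_{J,c'}(x')$ via Lemma~\ref{basic_euler}, and the sum is over $(c',c'') \in \{0,1\}^N \times \{0,1\}^N$ with $c'_i + c''_i = 1$, $\sum c'_i|U_{j_i}| = |V'|$, $\sum c''_i |U_{j_i}| = |V''|$.

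The main obstacle I anticipate is the step identifying the $T$-fixed flags precisely---specifically, justifying that \emph{each} subquotient $V^{i-1}/V^i$ of a $T$-fixed flag is supported on a single indecomposable summand of $x$. Lemma~\ref{submodule} is stated for a submodule of $x$ itself, not for a subquotient appearing partway down a flag, so one must either iterate the argument (replacing $x$ by $V^i$, which is still $T$-invariant and hence still a direct sum of submodules of indecomposables, though not obviously a direct sum of indecomposables) or prove the following refinement: a $T$-invariant indecomposable \emph{subquotient} $V^{i-1}/V^i$ with $V^{i-1}, V^i$ both $T$-invariant decomposes compatibly with the ambient isotypic decomposition and, being indecomposable, is concentrated in one summand. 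Handling the bookkeeping so that the ``deletion of repeated steps'' genuinely produces honest flags of the claimed types on each side---and that nothing is lost or double-counted---is the place where care is needed; the rest is a routine application of the additivity and multiplicativity of $\chi$.
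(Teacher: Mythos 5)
Your proposal is correct and follows essentially the same route as the paper: reduce to the Euler characteristic of the flag variety via Lemma~\ref{basic_euler}, localize to torus-fixed flags via Lemma~\ref{fixedpoint}, split each $V^i$ as $(V^i\cap V')\oplus(V^i\cap V'')$, and use indecomposability of each subquotient to force it into exactly one of the two summands, yielding the sum over pairs $(c',c'')$. The subtlety you flag about subquotients is resolved exactly as you suggest (and as the paper does): the splitting of the flag induces $V^{i-1}/V^i\cong (V^{i-1}\cap V'')/(V^i\cap V'')\oplus (V^{i-1}/V'')/(V^i/V'')$, and indecomposability kills one factor; the paper uses only the coarse torus $\mathbb{C}^*\times\mathbb{C}^*$ scaling $V'$ and $V''$, which is all your argument actually needs.
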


\begin{proof}
By definition of $Res_{V',V''}^{V}$ and Lemma \ref{basic_euler} , we have $Res_{V',V''}^{V} d_{J}(x',x'')=d_J(x'\oplus x'')=\chi(\{V=V^0\supseteq V^1 \supseteq ....\supseteq V^N=0: x(V^i) \subset V^i, [V^{i-1}/V^{i}] \in K_{j_i}\})$.
Let $T=\mathbb{C}^{*}\times \mathbb{C}^{*}$ be a torus acting on $V=V'\oplus V''$ by scaling summand $V'$ and $V''$ and it induces an action on $\{V=V^0\supseteq V^1 \supseteq ....\supseteq V^N=0: x(V^i) \subset V^i, [V^{i-1}/V^{i}] \in K_{j_i}\}$.
Let $X=\{V=V^0\supseteq V^1 \supseteq ....\supseteq V^N=0: x(V^i) \subset V^i, [V^{i-1}/V^{i}] \in K_{j_i}\}$.
Then by Lemma \ref{fixedpoint}, we have $d_J(x'\oplus x'')=\chi(X)=\chi(X^T)$.
By Lemma \ref{submodule}, for each flag $(V^i)_{i=0,1,...,N}$ in $X^T$, we have  $V^i= V^i \cap V'\oplus V^i\cap V''$.
Then $V^{i-1}/V^{i}\cong V^{i-1}\cap V''/V^{i}\cap V'' \oplus (V^{i-1}/V'')/(V^{i}/V'')$.
As a result,  $V^{i-1}\cap V''/V^{i}\cap V'' $ is either trivial or isomorphic to $V^{i-1}/V^{i}$ since $V^{i-1}/V^{i}$ is indecomposable.
Then following  \cite{GLS05} Lemma 6.1, we have $Res_{V',V''}^{V}d_{J}(x',x'')=\sum_{(c',c'')}d_{J,c'}(x')d_{J,c''}(x'')$ where $(c',c'')$ runs through all pairs of sequences in $\{0,1\}^N \times \{0,1\}^N$ which satisfy $c_i'+c_i''=1$ , $\sum_{i=1}^{N}c_i'|U_{j_i}|=|V'|$ and $\sum_{i=1}^{N}c_i''|U_{j_i}|=|V''|$.

\end{proof}

\begin{cor}
\label{coprod}
The map $\Delta: \mathcal{A} \rightarrow \mathcal{A}\otimes \mathcal{A}$ is multiplicative i.e. $\Delta(fg)=\Delta(f)\Delta(g)$ , coassociative and cocommutative.   
As a result, $(\mathcal{A},*,\Delta)$ is a Hopf algebra.
\end{cor}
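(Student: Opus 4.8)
The plan is to extract Corollary~\ref{coprod} from Lemma~\ref{key} by purely formal bookkeeping, once one observes that every element of $\mathcal{A}$ is, by definition, a linear combination of products $d_J$ of the generators $I_i$. First I would prove multiplicativity. Take two such products $d_{J}$ and $d_{J'}$ with $J=(j_1,\dots,j_N)$ and $J'=(j'_1,\dots,j'_M)$; their product in $\mathcal{A}$ is $d_{J''}$ where $J''=(j_1,\dots,j_N,j'_1,\dots,j'_M)$ is the concatenation. Applying Lemma~\ref{key} to $d_{J''}$ gives a sum over pairs $(c',c'')\in\{0,1\}^{N+M}\times\{0,1\}^{N+M}$ with $c'_i+c''_i=1$ and the two dimension-vector constraints; splitting each such pair into its first $N$ coordinates and its last $M$ coordinates, and using the fact that $|V'|,|V''|$ can be any splitting compatible with the summands, shows that this sum factors as $\big(\sum d_{J,c'_{(1)}}(x')\,d_{J,c''_{(1)}}(x'')\big)\big(\sum d_{J',c'_{(2)}}(x')\,d_{J',c''_{(2)}}(x'')\big)$, which is exactly $\Delta(d_J)\Delta(d_{J'})$ evaluated at $(x',x'')$. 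Extending by linearity over all linear combinations of the $d_J$ gives $\Delta(fg)=\Delta(f)\Delta(g)$ for all $f,g\in\mathcal{A}$, and in particular $\Delta$ lands in $\mathcal{A}\otimes\mathcal{A}$ because each $d_{J,c}$ is again a product of generators.

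Next I would check coassociativity and cocommutativity, again on the spanning set $\{d_J\}$. For cocommutativity, note that the set of pairs $(c',c'')$ appearing in Lemma~\ref{key} is visibly stable under the involution $(c',c'')\mapsto(c'',c')$, and the dimension constraints are symmetric under swapping $|V'|\leftrightarrow|V''|$; hence $\tau\circ\Delta(d_J)=\Delta(d_J)$ where $\tau$ is the flip. For coassociativity, I would either (i) iterate the argument of Lemma~\ref{key} with a three-torus $(\mathbb{C}^*)^3$ acting on $V=V'\oplus V''\oplus V'''$, showing directly that both $(\Delta\otimes\mathrm{id})\Delta(d_J)$ and $(\mathrm{id}\otimes\Delta)\Delta(d_J)$ equal the sum over triples $(c',c'',c''')\in(\{0,1\}^N)^3$ with $c'_i+c''_i+c'''_i=1$ and three dimension constraints, and hence agree; or (ii) deduce it formally from multiplicativity of $\Delta$ together with the fact that $\Delta(I_i)$ is obviously coassociative on each generator (there are only the two terms $I_i\otimes 1$ and $1\otimes I_i$ by indecomposability of $U_i$), since a multiplicative map that is coassociative on algebra generators is coassociative everywhere. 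Approach (ii) is cleaner and I would present that.

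Finally, to conclude that $(\mathcal{A},*,\Delta)$ is a Hopf algebra I would assemble the remaining structure: the counit $\varepsilon\colon\mathcal{A}\to\mathbb{C}$ is the projection onto the $|V|=0$ graded piece (equivalently $\varepsilon(I_i)=0$, $\varepsilon(1)=1$), and the counit axiom on the $d_J$ is immediate from Lemma~\ref{key} by taking $|V'|=0$ or $|V''|=0$. The algebra $\mathcal{A}=\widetilde{\mathcal{M}}$ is $\mathbb{N}^I$-graded with finite-dimensional pieces and connected (the degree-$0$ piece is $\mathbb{C}$), it is commutative and cocommutative, so it is automatically a bialgebra; and a connected graded bialgebra always admits an antipode, constructed degree by degree by the standard recursion $S(x)=-x-\sum S(x_{(1)})x_{(2)}$ over $\deg x_{(1)}>0$. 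Hence $\mathcal{A}$ is a Hopf algebra.

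The main obstacle is really Lemma~\ref{key} itself, which is already proved; everything in this corollary is formal once that lemma is in hand. The one place demanding a little care is making sure the pairs $(c',c'')$ enumerated for the concatenated sequence $J''$ biject with pairs of pairs in a way compatible with \emph{all} compatible splittings $|V|=|V'|+|V''|$ simultaneously (i.e. that no cross-terms between the $J$-block and the $J'$-block are lost or double-counted), so I would state that decomposition carefully; but I expect no genuine difficulty, only bookkeeping.
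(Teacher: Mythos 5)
Your proposal is correct and fills in exactly the bookkeeping that the paper's one-line proof (``It directly follows from Lemma~\ref{key}'') leaves implicit: concatenation of index sequences for multiplicativity, symmetry of the set of pairs $(c',c'')$ for cocommutativity, agreement of algebra homomorphisms on generators for coassociativity, and the standard counit/antipode for a connected graded bialgebra. One stray remark: $\mathcal{A}$ is \emph{not} commutative (e.g.\ $[u_1,u_2]=u_4-u_3$ in type $A_2$), but you never actually use that claim --- the bialgebra structure already follows from $\Delta$ being a counital algebra homomorphism, so the argument stands.
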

\begin{proof}
It directly follows from Lemma \ref{key}.

\end{proof}

\begin{lem}
\label{form} 
Let $r$ be the number of indecomposable modules for $Q$ ($1,4,12,40$ for $A_{1,2,3,4}$).
Denote by $U_i$ the i-th indecomposable $\Lambda$-module and order the indecomposable modules such that if  
$U_i$ is a submodule of $U_j$, then $i\le j$ by a refinement of total dimension. 
Follwoing definition \ref{important}, we have $$I(a_1,a_2,...,a_r) * I_j = (a_j+1) I(a+e_j)+\sum_{b\in S } c_b I(b),$$ where $e_j=(\delta_{ij})_{i=1}^{r}$ and $S=\{(b_1,...,b_r) \in \mathbb{N}^{r}: b_{i_0}=a_{i_0}+1 \text{ for some $i_0>j$}, \sum_{i}b_i|U_i|=|V''|\}$.
\end{lem}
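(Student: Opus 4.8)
The plan is to compute the coefficients in the expansion $I(a)*I_j=\sum_b c_b\,I(b)$ by evaluating both sides at orbit representatives. By Definition \ref{important} the functions $I(b)$, as $b$ ranges over $\{b\in\mathbb{N}^r:\sum_i b_i|U_i|=|V''|\}$, form a basis of $M(\Lambda_{V''})^{G_{V''}}$, and (by Krull--Schmidt together with finiteness of the set of indecomposables) $I(b)(x_{b'})=\delta_{b,b'}$, where $x_{b'}$ is a point in the orbit of $b'_1U_1\oplus\cdots\oplus b'_rU_r$. Hence $c_b=(I(a)*I_j)(x_b)$, and by the definition of $*$ together with Lemma \ref{basic_euler} (with $n=2$, $K_1$ the orbit of $\bigoplus_i a_iU_i$ and $K_2$ the orbit of $U_j$) this equals $\chi(\mathcal{F}_b)$, where $\mathcal{F}_b=\{\,U\subseteq x_b:\ U\text{ a }\Lambda\text{-submodule},\ U\cong U_j,\ x_b/U\cong\bigoplus_i a_iU_i\,\}$, a locally closed subset of a Grassmannian.

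First I would localize. Write $x_b=W_1\oplus\cdots\oplus W_t$ as a sum of $t=\sum_i b_i$ indecomposable summands and let $T=(\mathbb{C}^*)^t$ scale the $W_\ell$ independently; this is a subtorus of $G_{V''}$ fixing the point $x_b$, so it preserves $\mathcal{F}_b$, and Lemma \ref{fixedpoint} gives $c_b=\chi(\mathcal{F}_b)=\chi(\mathcal{F}_b^T)$. By Lemma \ref{submodule}, any $U\in\mathcal{F}_b^T$ lies inside a single summand $W_\ell$, which is a copy of some $U_k$; since $U\cong U_j$ embeds in $U_k$, the chosen ordering forces $j\le k$, and $k=j$ forces $U=W_\ell$ by a dimension count. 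Writing $U_k/U\cong\bigoplus_i m_iU_i$ and comparing with $x_b/U\cong\bigoplus_i a_iU_i$ yields $a_k=b_k-1+m_k$ and $a_i=b_i+m_i$ for $i\ne k$, hence $b_k\le a_k+1$ and $b_i\le a_i$ for all $i\ne k$.

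Then I would finish with a short sign argument using the dimension identity $\sum_i(b_i-a_i)|U_i|=|U_j|$ (immediate from $\sum_i b_i|U_i|=|V''|=|U_j|+\sum_i a_i|U_i|$). If $b_k\le a_k$ as well, the left-hand side has all entries $\le 0$ while $|U_j|$ has a positive entry, a contradiction; so $b_k=a_k+1$. If moreover $k=j$, then $\sum_{i\ne j}(b_i-a_i)|U_i|=0$ with every term having all entries $\le 0$, which forces $b_i=a_i$ for $i\ne j$, i.e.\ $b=a+e_j$; and in that case a direct check shows that the only elements of $\mathcal{F}_{a+e_j}^T$ are the $a_j+1$ coordinate copies of $U_j$ (a copy of $U_k$ with $k>j$ cannot occur, since matching $U_j$-multiplicities would force $m_j=-1$), so $\mathcal{F}_{a+e_j}^T$ consists of $a_j+1$ points and $c_{a+e_j}=\chi(\mathcal{F}_{a+e_j}^T)=a_j+1$. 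If $b\ne a+e_j$ but $\mathcal{F}_b^T\ne\varnothing$, the previous sentence excludes $k=j$, so $k>j$ and $b_k=a_k+1$, meaning $b\in S$; and if $\mathcal{F}_b^T=\varnothing$ then $c_b=0$ by Lemma \ref{fixedpoint}. Collecting the cases gives $I(a)*I_j=(a_j+1)I(a+e_j)+\sum_{b\in S}c_b\,I(b)$, as claimed.

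The main obstacle I foresee is the leading-term count: verifying that for $b=a+e_j$ the $T$-fixed locus $\mathcal{F}_{a+e_j}^T$ is exactly the finite set of $a_j+1$ coordinate copies of $U_j$, with no extra, possibly positive-dimensional, fixed submodules coming from the other summands. This is precisely where the ordering of the indecomposables by ``is a submodule of'' and the sign of the identity $\sum_i(b_i-a_i)|U_i|=|U_j|$ are essential. Everything else --- the reduction to an Euler characteristic via Lemma \ref{basic_euler}, the passage to $T$-fixed points via Lemma \ref{fixedpoint}, and the structure of $T$-fixed submodules via Lemma \ref{submodule} --- should be routine.
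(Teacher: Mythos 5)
Your proposal is correct and follows essentially the same route as the paper's own proof: express $c_b$ as the Euler characteristic of the variety of submodules $U\cong U_j$ of $x_b$ with prescribed quotient, localize to the fixed points of the torus scaling the indecomposable summands (Lemma \ref{fixedpoint}), use Lemma \ref{submodule} to confine a $T$-fixed indecomposable submodule to a single summand $U_{i_0}$ with $i_0\ge j$ by the chosen ordering, and compare multiplicities to force $b_{i_0}=a_{i_0}+1$, with the leading coefficient $a_j+1$ coming from counting the $U_j$-summands. The only cosmetic difference is that the paper deduces $b_{i_0}=a_{i_0}+1$ directly from the fact that $U_{i_0}/U$ has no summand isomorphic to $U_{i_0}$, whereas you reach the same conclusion via the dimension-vector sign argument; both are valid.
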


\begin{proof}
Let $V,V'$ be the ambient vector spaces such that $I(a_1,a_2,...,a_r) \in \Lambda_{V}$, $I_j \in \Lambda_{V'}$. 
Let $V''=V\oplus V'$ and  $x=b_1U_1\oplus b_2U_2\oplus..\oplus b_r U_r \in \Lambda_{V''}$.
Suppose $1_{b_1U_1\oplus b_2U_2\oplus..\oplus b_r U_r}$ is a nonzero summand with coefficient $c_b$.
Then $I(a_1,a_2,...,a_r)*I_j(x)=c_b$.

By the definition of product structure on $\tilde{\mathcal{M}}$,
 \begin{align*}c_b=\chi(\{U\in Gr(|V'|,V''): U\cong U_j, x/U \cong a_1U_1\oplus a_2U_2\oplus...\oplus a_r U_r \}).\end{align*}
Let $T=(\mathbb{C}^{*})^{\sum_{i}b_i}$ act on $x$ by scaling each indecomposable module independently.
The action is well-defined since it preserves each preprojective relation.
Then by Lemma \ref{fixedpoint}, $c_b=\chi(\{U\in Gr(|V'|,V''): U\cong U_j, x/U \cong \oplus_{i=1}^{r}a_iU_i \}^T)$.
Let $U\in \{U\in Gr(|V'|,V''): U\cong U_j, x/U \cong \oplus_{i=1}^{r}a_iU_i \}^T$.
Since $U$ is invariant under $T$ and is isomorphic to an indecomposable module $U_j$,  by Lemma \ref{submodule}, $U$ is a $\Lambda$-submodule of one of the $U_i$ summands of $x=\oplus_{i}b_i U_i$.
Since $c_b \neq 0$, there is an integer $i_0$ between $1$ and $r$ such that $U$ is a submodule of $U_{i_0}$ as a summand of $x$.
From the ordering of $U_i$, we have $i_0\ge j$.
Since $x/U\cong\oplus_{i=1}^{r}a_iU_i$, we have $b_1U_1\oplus b_2U_2\oplus...\oplus(b_{i_0}-1) U_{i_0}\oplus...\oplus b_r U_r\oplus(U_{i_0}/U)\cong \oplus_{i=1}^{r}a_iU_i$.
Then we have $b_{i_0}-1=a_{i_0}$ since $U_{i_0}/U$ does not have summand isomorphic to $U_{i_0}$.
If $i_0=j$, $c_b=a_j+1$.
As a result, we have:
$$I(a_1,a_2,...,a_r) * I_j = (a_j+1) I(a+e_j)+\sum_{b\in S } c_b I(b),$$
where $S=\{(b_1,...,b_r) \in \mathbb{N}^{r}: b_{i_0}=a_{i_0}+1 \text{ for some $i_0>j$}, \sum_{i}b_i|U_i|=|V''|\}$.

\end{proof}

\begin{lem}
$\mathcal{A}$ is equal to $\widetilde{\mathcal{M}}$.
\end{lem}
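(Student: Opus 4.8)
We must show that every $G_V$-invariant constructible function on $\Lambda_V$ lies in the subalgebra $\mathcal{A}$ generated by the $I_i$. Since $M(\Lambda_V)^{G_V}$ is spanned by the indicator functions $I(a)=1_{a_1U_1\oplus\cdots\oplus a_rU_r}$ (as $a$ ranges over tuples with $\sum_i a_i|U_i|=|V|$), it suffices to prove that each $I(a)$ belongs to $\mathcal{A}$. The idea is a downward induction using the triangularity statement of Lemma~\ref{form}.

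\emph{First}, I would fix a monomial ordering on the set of tuples $a\in\mathbb{N}^r$ (for a fixed dimension vector $|V|$): declare $b>a$ if $b$ is obtained from $a$ by a move that strictly increases, in reverse lexicographic order, the ``later'' coordinates, i.e.\ order tuples by the largest index $i_0$ at which they differ, with a larger value of $b_{i_0}$ counting as larger. This is exactly the ordering implicitly used in Lemma~\ref{form}: the term $\sum_{b\in S}c_b I(b)$ appearing there has every $b\in S$ satisfying $b_{i_0}=a_{i_0}+1$ for some $i_0>j$, hence $b$ is \emph{strictly larger} than $a+e_j$ in this ordering. The base case of the induction is the unique \emph{maximal} tuple $a^{\max}$ for the given $|V|$ in this ordering (the one concentrated as far as possible on the highest-indexed indecomposables); I would need a small separate argument that $I(a^{\max})\in\mathcal{A}$, typically because $a^{\max}$ corresponds to a module all of whose summands are the highest indecomposable(s), and for those the product $I_{j}*\cdots*I_{j}$ already equals a nonzero multiple of $I(a^{\max})$ with no lower-order correction (the set $S$ is empty when $j=r$).

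\emph{Second}, for the inductive step I would take an arbitrary $a$ with $\sum_i a_i|U_i|=|V|$ and pick any index $j$ with $a_j\geq 1$; write $a=a'+e_j$. Applying Lemma~\ref{form} to $a'$ gives
$$I(a')*I_j=(a'_j+1)\,I(a'+e_j)+\sum_{b\in S}c_b\,I(b)=(a_j)\,I(a)+\sum_{b\in S}c_b\,I(b),$$
where every $b\in S$ is strictly greater than $a$ in the chosen ordering. By the induction hypothesis each such $I(b)\in\mathcal{A}$, and $I(a')\in\mathcal{A}$ as well (it has smaller total sum of coordinates, or one can do a nested induction on $\sum_i a_i$; alternatively $a'$ is handled by the same ordering-induction applied to the smaller dimension vector $|V|-|U_j|$ together with multiplication). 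Since $a_j\geq 1$ is invertible in $\mathbb{C}$, we may solve
$$I(a)=\frac{1}{a_j}\Bigl(I(a')*I_j-\sum_{b\in S}c_b\,I(b)\Bigr)\in\mathcal{A}.$$
This closes the induction, so $I(a)\in\mathcal{A}$ for all $a$, giving $M(\Lambda_V)^{G_V}\subseteq\mathcal{A}$, and since the reverse inclusion is immediate, $\mathcal{A}=\widetilde{\mathcal{M}}$.

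\emph{Main obstacle.} The delicate point is organizing the two nested inductions (one on the total multiplicity $\sum_i a_i$, i.e.\ passing to smaller ambient spaces, and one on the ordering of tuples with fixed dimension vector) so that the recursion is genuinely well-founded — in particular verifying that every $b\in S$ really is $>a$, which hinges on the ordering of indecomposables being compatible with ``submodule implies smaller index, refined by total dimension,'' and that this is a \emph{total} order with the correct maximal element serving as base case. Once the bookkeeping of the ordering is set up correctly, Lemma~\ref{form} does all the real work.
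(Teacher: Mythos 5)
Your strategy is essentially the paper's: reduce to showing each $I(a)\in\mathcal{A}$, then run a double induction (on the total multiplicity $\sum_i a_i$ to handle $I(a')$, and downward on how the support of $a$ sits in the chosen ordering of indecomposables), with Lemma \ref{form} supplying the triangularity. The one soft spot --- which you yourself flag as the main obstacle --- is the assertion that every $b\in S$ is strictly larger than $a+e_j$ in your reverse-lexicographic order. As stated, Lemma \ref{form} only guarantees $b_{i_0}=a_{i_0}+1$ at \emph{some} $i_0>j$; it does not forbid $b_{i_1}<a_{i_1}$ at a still larger index $i_1>i_0$, which would make $b$ \emph{smaller} than $a+e_j$ in your order, so the ``hence'' in your first step is not a formal consequence of the lemma when $j$ is an arbitrary index with $a_j\ge 1$. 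You can close this either with the extra geometric input that every indecomposable summand of $U_{i_0}/U$ has strictly smaller total dimension than $U_{i_0}$ and hence strictly smaller index (so the nonzero terms $c_bI(b)$ all have $b$ agreeing with $a$ above $i_0$), or by doing what the paper does: always take $j=m$, the largest index with $a_m>0$. With that choice $a_{i_0}=0$ for all $i_0>m$, so every $b\in S$ has $b_{i_0}=1$ for some $i_0>m$ and its largest nonzero index strictly exceeds $m$; the inner induction then becomes a clean downward induction on that largest nonzero index, with base case $m=r$ (where $S=\emptyset$), and no separate treatment of a ``maximal tuple'' is needed.
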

\begin{proof}
$\widetilde{\mathcal{M}}$ is graded by dimension so it is $ \mathbb{N}^{I}$-graded.
It suffices to show that for each dimension vector $v=(v_i)_{i\in I}\in \mathbb{N}^{I}$, $\mathcal{A}_{v}=\widetilde{\mathcal{M}}_v$. 
In the base case when the sum of $v$ is equal to $1$, $\mathcal{A}_{v}=\widetilde{\mathcal{M}}_v$ because both are $1$-dimensional. 
We proceed to show $\mathcal{A}_{v}=\widetilde{\mathcal{M}}_v$ by induction.
Suppose $\mathcal{A}_{v}=\widetilde{\mathcal{M}}_v$ for all $v$ such that $\sum_{i\in I} v_i<N$.
Now let $v=(v_i)_{i\in I}\in \mathbb{N}^{I}$  such that $\sum_{i\in I} v_i=N$. 
Using the notation in  Definition \ref{important}, It suffices to show that for any $I(a_1,...,a_r) \in \widetilde{\mathcal{M}}_v$, we have $I(a_1,...,a_r)\in \mathcal{A}_v$. 
We can further order the indecomposable modules such that if $U_i$ is a submodule of $U_j$, then $i\le j$ by a refinement of total dimension.
Now we fix $N$. 
Let $m$ be the largest index such that $a_m$ is not zero (i.e. $(a_1,a_2,...,a_r)=(a_1,...,a_m,0,..,0)$).
We can further use induction on $m$ to show $I(a_1,...,a_r)\in \mathcal{A}_v$.
For the base case, when $m=r$, by Lemma \ref{form}, $I(a_1,...,a_r-1)*I_r=a_r I(a_1,...,a_r)$.
By induction assumption $I(a_1,...,a_r-1),I_r\in \mathcal{A}$ so $I(a_1,...,a_r)\in \mathcal{A}_v$.
Now assume that $I(a_1,...,a_r)\in \mathcal{A}_v$ whenever the largest index $m$ such that $a_m> 0$ is greater than $M$.
When $m=M$, we want to show that $I(a_1,...,a_M,0,...,0)\in \mathcal{A}_v$. 
We can consider $I(a_1,...,a_M-1,0,...,0)*I_M$.
By Lemma \ref{form}, $I(a_1,...,a_M-1,0,...,0)*I_M=a_M I(a_1,a_2,...,a_M,0,...,0)+L$, where $L$ is a linear combination of other $I(a_1',...,a_r')$ whose highest index $m$ such that $a_m'>0$ is larger than $M$. 
By induction assumption, we have $I(a_1,...,a_m,0,...,0)\in \mathcal{A}_v$ if $m>M$ and $a_m>0$. 
As a result, $L\in \mathcal{A}_v$, and $I(a_1,a_2,...,a_M,0,...,0)=(I(a_1,...,a_M-1,0,...,0)*I_M-S)/a_M \in \mathcal{A}_v$ since by induction we also have $I(a_1,...,a_M-1,0,...,0)\in \mathcal{A}$.

\end{proof}

\begin{lem}[Milnor-Moore theorem \cite{MM65}]
\label{MM}
Given a connected graded cocommutative Hopf algebra $A$ over a field of characteristic zero with $ \dim A_n < \infty$, the natural Hopf algebra homomorphism $ U(P(A)) \to A$ from the universal enveloping algebra of the graded Lie algebra $P(A)$ of primitive elements of $A$ is an isomorphism. 
\end{lem}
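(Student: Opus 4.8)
Since this is a classical theorem the expectation is simply to cite \cite{MM65}; still, let me indicate the line of argument I would follow. The plan is to build the natural map, prove it is injective from its effect on primitives, and prove it is surjective by a Hilbert-series count fed by the structure theory of connected Hopf algebras in characteristic zero. For the construction: if $x,y\in A$ are primitive then the graded commutator $xy-(-1)^{|x|\,|y|}yx$ is again primitive, so $P(A)$ is a graded Lie subalgebra of $A$, and the inclusion $P(A)\hookrightarrow A$ extends, by the universal property of the enveloping algebra, to a morphism of graded associative algebras $\phi\colon U(P(A))\to A$. Both $\Delta_A\circ\phi$ and $(\phi\otimes\phi)\circ\Delta_{U(P(A))}$ are algebra maps which agree on the algebra generators $P(A)$ (each sends a primitive $p$ to $p\otimes 1+1\otimes p$), hence they agree; so $\phi$ is a morphism of connected graded Hopf algebras, and it respects the grading.

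Next I would prove $\phi$ is injective using the standard lemma that a morphism $f$ of connected graded coalgebras over a field which is injective on primitive elements is injective: a nonzero homogeneous element of least positive degree $n$ in $\ker f$ has reduced coproduct lying in $\bigoplus_{i+j=n,\ i,j\ge 1}A_i\otimes A_j$, where $f\otimes f$ is injective by minimality of $n$, so that element is primitive, contradicting injectivity on primitives. To apply this I need $P(\phi)$ injective, and this holds because in characteristic zero the graded Poincar\'e--Birkhoff--Witt theorem gives $P(U(\mathfrak g))=\mathfrak g$ for every graded Lie algebra $\mathfrak g$; taking $\mathfrak g=P(A)$, the map $P(\phi)$ is just the identity of $P(A)$.

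For surjectivity it then suffices to check that $\dim_k A_n=\dim_k U(P(A))_n$ for all $n$, since an injective graded map which is a dimension-by-dimension equality is an isomorphism. Here one invokes Leray's theorem (valid over a field of characteristic zero): the graded dual $A^\ast$, which is a connected graded commutative Hopf algebra because $A$ is cocommutative and locally finite, is free as a commutative algebra, so $A^\ast\cong\Sym\bigl(Q(A^\ast)\bigr)$ as graded vector spaces, where $Q(A^\ast)=A^\ast_+/(A^\ast_+)^2\cong P(A)^\ast$ is its space of indecomposables. Comparing Hilbert series and using PBW once more, $\sum_n(\dim A_n)t^n=\sum_n(\dim A^\ast_n)t^n=\sum_n\bigl(\dim\Sym(P(A))_n\bigr)t^n=\sum_n\bigl(\dim U(P(A))_n\bigr)t^n$, whence $\phi$ is an isomorphism of Hopf algebras.

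The substantive step is the surjectivity, and inside it Leray's freeness theorem — equivalently, the statement that $A$ is primitively generated — which is exactly where the characteristic-zero hypothesis is essential: over a field of characteristic $p$ the divided-power Hopf algebras are connected, graded and cocommutative but are not primitively generated, and the conclusion fails. Everything else in the argument is either formal (the construction of $\phi$ and the primitives-to-injectivity lemma) or an immediate consequence of PBW (the identities $P(U\mathfrak g)=\mathfrak g$ and $\dim U(\mathfrak g)_n=\dim\Sym(\mathfrak g)_n$).
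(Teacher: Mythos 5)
The paper itself gives no argument here: the statement is recorded as a lemma whose ``proof'' is the citation to \cite{MM65}, so any comparison is necessarily between your sketch and the classical literature rather than with anything in the text. Your outline is the standard one and is correct: the construction of $\phi\colon U(P(A))\to A$ and the verification that it is a Hopf map are formal; injectivity follows from the connected-graded-coalgebra lemma together with $P(U(\mathfrak g))=\mathfrak g$ (which does use characteristic zero, via the coalgebra isomorphism $U(\mathfrak g)\cong\Sym(\mathfrak g)$ from PBW); and surjectivity reduces to a Hilbert-series count once one knows $A$ is primitively generated. Your route to that last fact --- dualize, invoke Leray's freeness theorem for the connected graded commutative Hopf algebra $A^{\ast}$, and use $Q(A^{\ast})\cong P(A)^{\ast}$ --- is legitimate but is not the only standard one; Milnor and Moore argue directly on $A$ by induction over the coaugmentation filtration, which avoids the local-finiteness bookkeeping needed to dualize (harmless here, since $\dim A_n<\infty$ is hypothesized). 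One small point to keep consistent: you state the theorem with Koszul signs ($xy-(-1)^{|x||y|}yx$ and graded-commutative duals), whereas the application in this paper uses the unsigned version --- the Lie algebra $\widetilde{\mathfrak n}$ is defined by the ordinary commutator $[u_i,u_j]=u_iu_j-u_ju_i$ and $\Delta$ is cocommutative in the naive sense; both versions are in \cite{MM65}, but the $\Sym$ appearing in your Hilbert-series comparison must then be the ordinary symmetric algebra, not the free graded-commutative one. With that convention fixed, your sketch is a complete and correct reduction of the lemma to PBW and Leray.
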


\begin{thm}
For $Q=A_1,A_2,A_3,A_4$, let $\mathcal{A}$ be the subalgebra generated by the orbit of indecomposable preprojective modules. Then $\mathcal{A}$ is isomorphic to $U\widetilde{\mathfrak{n}}$ for some Lie algebra $\widetilde{\mathfrak{n}}$ with a basis indexed by indecomposable modules.
\end{thm}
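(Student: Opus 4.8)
The plan is to realize $\mathcal{A}$ as a universal enveloping algebra through the Milnor--Moore theorem (Lemma \ref{MM}), take $\widetilde{\mathfrak{n}}$ to be its Lie algebra of primitive elements, and then identify the generators $I_1,\dots,I_r$ themselves as a basis of $\widetilde{\mathfrak{n}}$ indexed by the indecomposable modules.

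First I would check that $\mathcal{A}$ satisfies the hypotheses of Lemma \ref{MM}. It is graded by dimension vector, hence $\mathbb{N}$-graded by total dimension, and connected, since $\Lambda_0$ is a point and so $\mathcal{A}_0 = M(\Lambda_0)^{G_0} = \mathbb{C}$. By Corollary \ref{coprod} it is a cocommutative Hopf algebra over $\mathbb{C}$, which has characteristic zero. Each graded piece is finite-dimensional: by the lemma asserting $\mathcal{A} = \widetilde{\mathcal{M}}$, $\mathcal{A}_v = M(\Lambda_V)^{G_V}$, which is finite-dimensional precisely because the number of indecomposable $\Lambda$-modules is finite for $Q = A_1,\dots,A_4$. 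Lemma \ref{MM} then provides a graded Hopf algebra isomorphism $\mathcal{A} \cong U(\widetilde{\mathfrak{n}})$, where $\widetilde{\mathfrak{n}} := P(\mathcal{A})$ carries the induced grading with homogeneous pieces $\widetilde{\mathfrak{n}}_d = P(\mathcal{A})_d$.

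Next I would prove that $\{I_j\}_{j=1}^r$ is a basis of $\widetilde{\mathfrak{n}}$. Each $I_j$ is primitive: taking $N=1$ and $J=(j)$ in Lemma \ref{key}, the sum computing $\mathrm{Res}^V_{V',V''}(I_j)$ runs over pairs $(c',c'') \in \{0,1\}\times\{0,1\}$ with $c'+c''=1$, $c'|U_j|=|V'|$ and $c''|U_j|=|V''|$; the only two possibilities are $(c',c'')=(1,0)$, which forces $|V''|=0$ and contributes $I_j\otimes 1$, and $(c',c'')=(0,1)$, which forces $|V'|=0$ and contributes $1\otimes I_j$. Hence $\Delta(I_j)=I_j\otimes 1+1\otimes I_j$, so $I_j\in\widetilde{\mathfrak{n}}_{|U_j|}$. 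The $I_j$ are linearly independent: within each graded piece they are indicator functions of pairwise distinct $G_V$-orbits. That they span $\widetilde{\mathfrak{n}}$ I would establish by a Hilbert series count. By Definition \ref{important} together with $\mathcal{A}=\widetilde{\mathcal{M}}$, the family $\{I(a):a\in\mathbb{N}^r\}$ is a basis of $\mathcal{A}$, so the Hilbert series of $\mathcal{A}$ graded by $\mathbb{N}^I$ equals $\prod_{j=1}^r(1-t^{|U_j|})^{-1}$, where $t^d=\prod_{k\in I}t_k^{d_k}$; on the other hand the Poincar\'{e}--Birkhoff--Witt theorem applied to $\widetilde{\mathfrak{n}}$ gives the Hilbert series $\prod_d(1-t^d)^{-\dim\widetilde{\mathfrak{n}}_d}$. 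Writing $n_d:=\#\{j:|U_j|=d\}$ and equating, one gets $\prod_d(1-t^d)^{\dim\widetilde{\mathfrak{n}}_d-n_d}=1$; since the $I_j$ are primitive we already know $\dim\widetilde{\mathfrak{n}}_d\ge n_d$ for every $d$, and a minimal-degree argument on this formal power series identity (at a $d_0$ of smallest total degree with $\dim\widetilde{\mathfrak{n}}_{d_0}>n_{d_0}$ the coefficient of $t^{d_0}$ on the left would be $-(\dim\widetilde{\mathfrak{n}}_{d_0}-n_{d_0})\ne 0$) forces equality $\dim\widetilde{\mathfrak{n}}_d=n_d$ for all $d$. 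Thus in each degree $d$ the primitive functions $\{I_j:|U_j|=d\}$ form a basis of $\widetilde{\mathfrak{n}}_d$, so $\{I_j\}_{j=1}^r$ is a basis of $\widetilde{\mathfrak{n}}$ indexed by the indecomposable $\Lambda$-modules, which is the assertion of the theorem.

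I expect the main obstacle to be the final step's bookkeeping: verifying that PBW may be applied without super-sign corrections --- legitimate here because Corollary \ref{coprod} gives genuine, not merely graded, cocommutativity --- and running the formal power series cancellation cleanly, in particular checking that no product of monomials of strictly smaller total degree can contribute to the coefficient of $t^{d_0}$. The primitivity of the $I_j$, though it is the conceptual reason the resulting basis is indexed by indecomposables, is immediate from the degree constraints already built into Lemma \ref{key}.
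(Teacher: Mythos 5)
Your proposal is correct, and its skeleton is the same as the paper's: verify that $(\mathcal{A},*,\Delta)$ is a connected, graded, cocommutative Hopf algebra with finite-dimensional graded pieces, invoke Milnor--Moore (Lemma \ref{MM}), and identify the primitive elements with the indicator functions of orbits of indecomposables. The one place you genuinely diverge is the identification $P(\mathcal{A})=\mathrm{span}\{I_1,\dots,I_r\}$. The paper asserts this can be ``seen from the definition of $\Delta$''; the direct argument it has in mind is the explicit coproduct on the orbit basis, $\Delta(I(a))=\sum_{b\le a}I(b)\otimes I(a-b)$, from which linear independence of the $I(b)\otimes I(c)$ shows that $f=\sum_a c_a I(a)$ is primitive iff $c_a=0$ whenever $a$ splits nontrivially as $b+c$ with $b,c\neq 0$, i.e.\ iff $f$ is supported on the $I(e_j)=I_j$. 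You instead get the inclusion $\mathrm{span}\{I_j\}\subseteq P(\mathcal{A})$ from the $N=1$ case of Lemma \ref{key} and the reverse inclusion by equating the Hilbert series $\prod_{j}(1-t^{|U_j|})^{-1}$ of $\mathcal{A}=\widetilde{\mathcal{M}}$ (which relies on the lemma that $\mathcal{A}=\widetilde{\mathcal{M}}$ and on Krull--Schmidt) with the PBW series $\prod_d(1-t^d)^{-\dim P(\mathcal{A})_d}$, followed by a minimal-total-degree cancellation. Both routes are valid; yours is longer and needs PBW plus the surjectivity lemma, but it is airtight and correctly isolates the two delicate points (honest rather than graded cocommutativity, so no super-signs in PBW, and the fact that no product of lower-degree factors can contribute to the leading coefficient), whereas the paper's one-line assertion leaves the spanning direction implicit. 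Your verification of the Milnor--Moore hypotheses (connectedness via $\mathcal{A}_0=M(\Lambda_0)^{G_0}=\mathbb{C}$, finite-dimensionality of graded pieces from finite representation type) is also a worthwhile addition that the paper omits.
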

\begin{proof}
From Lemma \ref{coprod}, the map $\Delta$ is multiplicative on $\mathcal{A}\otimes \mathcal{A}$. 
We can conclude that $(\mathcal{A},\cdot,\Delta)$ is a Hopf algebra. 
From the definition of $\Delta$, we can see that the primitive elements are of the form $1_K$, where $K$ is an orbit of an indecomposable $\Lambda$-module.
Then by Lemma \ref{MM}, $\mathcal{A}$ is isomorphic to the universal enveloping algebra $U\widetilde{\mathfrak{n}}$ where $\widetilde{\mathfrak{n}}$ is freely spanned by primitive elements which are indicator functions on orbits of indecomposable modules.
\end{proof}


\subsection{Indecomposable modules for $Q=A_1,A_2,A_3,A_4$}
\label{table}
It is well-known that there are $1,4,12,40$ indecomposable modules for $Q=A_1,A_2,A_3,A_4$ respectively, described in \cite{GLS05}. 
For the sake of completeness, I will also describe and label all of them here. 
The dimension vectors are represented by the number at each vectex and if there is no arrow between two numbers, it means the map between two vertices is a zero map.
\\
For $Q=A_2$:
\begin{align*}
U_1=[1\hspace{5mm} 0]\quad U_2=[0\hspace{5mm}1]\quad U_3&=[1\to 1]\quad U_4=[1\leftarrow 1].
\end{align*}
For $Q=A_3$:
\begin{align*}
U_1&=[1\hspace{5mm} 0\hspace{5mm}  0 ] \quad U_2=[0\hspace{5mm} 1\hspace{5mm}  0 ]\quad U_3=[0\hspace{5mm} 0\hspace{5mm}  1 ]\quad
U_4=[1\to 1\hspace{5mm} 0]\quad U_5=[0\hspace{5mm} 1\to 1]\\
 U_6&=[1\leftarrow 1 \hspace{5mm} 0] \quad
 U_7=[0 \hspace{5mm} 1\leftarrow 1]\quad U_8=[1\to 1 \to 1]\quad U_9=[1\leftarrow 1 \leftarrow 1]\quad U_{10}=[1\rightarrow 1\leftarrow 1]\\
  U_{11}&=[1\leftarrow 1\rightarrow 1] \quad
U_{12}=
\left[
\begin{matrix}
 & & 1 & &\\
 &\swarrow && \searrow \\
 1 &&&& 1 \\
 &\searrow && \swarrow \\
 & & 1 & &\\
\end{matrix}
\right].
\end{align*}
For $Q=A_4$:\\
\begin{align*}
U_1&=[1\hspace{5mm}0\hspace{5mm}0\hspace{5mm}0]\quad U_2=[0\hspace{5mm}1\hspace{5mm}0\hspace{5mm}0]\quad U_3=[0\hspace{5mm}0\hspace{5mm}1\hspace{5mm}0]\quad U_4=[0\hspace{5mm}0\hspace{5mm}0\hspace{5mm}1]\quad U_5=[1\to 1 \hspace{5mm} 0 \hspace{5mm} 0] \\
U_6&=[1\leftarrow 1 \hspace{5mm} 0 \hspace{5mm} 0]\quad U_7=[0 \hspace{5mm}1\to 1 \hspace{5mm} 0]\quad U_8=[0 \hspace{5mm}1\leftarrow 1 \hspace{5mm} 0]\quad U_9=[0\hspace{5mm}0 \hspace{5mm}1\to 1]\quad U_{10}=[0\hspace{5mm}0 \hspace{5mm}1\leftarrow 1]\\
U_{11}&=[1\to 1 \to 1 \hspace{5mm} 0]\quad U_{12}=[1\leftarrow 1 \leftarrow 1 \hspace{5mm} 0]\quad U_{13}=[1\to 1 \leftarrow 1 \hspace{5mm} 0]\quad U_{14}=[1\leftarrow 1 \to 1 \hspace{5mm} 0]\\
U_{15}&=[0\hspace{5mm}1\to 1 \to 1]\quad U_{16}=[0\hspace{5mm}1\leftarrow 1 \leftarrow 1 ]\quad U_{17}=[0\hspace{5mm}1\to 1 \leftarrow 1]\quad U_{18}=[0\hspace{5mm}1\leftarrow 1 \to 1]\\
U_{19}&=
\left[
\begin{matrix}
 & & 1 & &&&\\
 &\swarrow && \searrow \\
 1 &&&& 1 &&0\\
 &\searrow && \swarrow \\
 & & 1 & &&&\\
\end{matrix}
\right]
\quad
U_{20}=\left[
\begin{matrix}
&& & & 1 & &\\
 &&&\swarrow && \searrow \\
 0&&1 &&&& 1 \\
 &&&\searrow && \swarrow \\
 &&& & 1 & &\\
\end{matrix}
\right] \quad 
U_{21}=[1\to 1\to 1 \to 1], U_{22}=[1\to 1\to 1 \leftarrow 1] \quad\\
U_{23}&=[1\to 1 \leftarrow 1 \to 1]\quad U_{24}=[1\to 1 \leftarrow 1 \leftarrow 1]\quad U_{25}=[1\leftarrow 1 \to 1 \to 1]\quad U_{26}=[1\leftarrow 1 \to 1 \leftarrow 1]\\
 U_{27}&=[1\leftarrow 1\leftarrow 1 \to 1]\quad U_{28}=[1\leftarrow 1 \leftarrow 1 \leftarrow 1]\quad 
 U_{29}=\left[
\begin{matrix}
 & & 1 & &&&\\
 &\swarrow && \searrow \\
 1 &&&& 1 &&\\
 &\searrow && \swarrow &&\searrow\\
 & & 1 & &&&1\\
\end{matrix}
\right]\quad
 U_{30}=\left[
\begin{matrix}
 & & 1 & &&&1\\
 &\swarrow && \searrow &&\swarrow\\
 1 &&&& 1 &&\\
 &\searrow && \swarrow \\
 & & 1 & &&&\\
\end{matrix}
\right] \quad\\
 U_{31}&=\left[
\begin{matrix}
&& & & 1 & &\\
 &&&\swarrow && \searrow \\
 &&1 &&&& 1 \\
 &\swarrow&&\searrow && \swarrow \\
 1&&& & 1 & &\\
\end{matrix}
\right]\quad
 U_{32}=\left[
\begin{matrix}
1&& & & 1 & &\\
 &\searrow&&\swarrow && \searrow \\
 &&1 &&&& 1 \\
 &&&\searrow && \swarrow \\
 &&& & 1 & &\\
\end{matrix}
\right]\quad
 U_{33}=\left[
\begin{matrix}
 & & 1 & &&&\\
 &\swarrow && \searrow \\
 1 &&&& 2 &&\\
 &\searrow && \swarrow &&\searrow\\
 & & 1 & &&&1\\
\end{matrix}
\right]\quad\\
 U_{34}&=\left[
\begin{matrix}
 & & 1 & &&&1\\
 &\swarrow && \searrow &&\swarrow\\
 1 &&&& 2 &&\\
 &\searrow && \swarrow \\
 & & 1 & &&&\\
\end{matrix}
\right]\quad
 U_{35}=\left[
\begin{matrix}
&& & & 1 & &\\
 &&&\swarrow && \searrow \\
 &&2 &&&& 1 \\
 &\swarrow&&\searrow && \swarrow \\
 1&&& & 1 & &\\
\end{matrix}
\right]\quad
 U_{36}=\left[
\begin{matrix}
1&& & & 1 & &\\
 &\searrow&&\swarrow && \searrow \\
 &&2 &&&& 1 \\
 &&&\searrow && \swarrow \\
 &&& & 1 & &\\
\end{matrix}
\right]\quad\\
 U_{37}&=\left[
\begin{matrix}
 & & 1 & &&&\\
 &\swarrow && \searrow \\
 1 &&&& 1 &&\\
 &\searrow && \swarrow &&\searrow\\
 & & 1 & &&&1\\
 & && \searrow &&\swarrow\\
   &&&& 1 &&\\
\end{matrix}
\right]\quad
 U_{38}=\left[
\begin{matrix}
&& & & 1 & &\\
 &&&\swarrow && \searrow \\
 &&1 &&&& 1 \\
 &\swarrow&&\searrow && \swarrow \\
 1&&& & 1 & &\\
 &\searrow&&\swarrow&&  \\
 &&1 &&&& 
\end{matrix}
\right]\quad
 U_{39}=\left[
\begin{matrix}
1&& & & 1 & &\\
 &\searrow&&\swarrow && \searrow \\
 &&2 &&&& 1 \\
 &\swarrow &&\searrow && \swarrow \\
 1&&& & 1 & &\\
\end{matrix}
\right]\quad\\
 U_{40}&=\left[
\begin{matrix}
 & & 1 & &&&1\\
 &\swarrow && \searrow &&\swarrow\\
 1 &&&& 2 &&\\
 &\searrow && \swarrow&&\searrow \\
 & & 1 & &&&1\\
\end{matrix}
\right].
\end{align*}

The most nontrivial indecomposable modules in $Q=A_4$ are $U_{39}$ and $U_{40}$ so I give their explicit maps here.\\

$
\begin{array}{ccccccccc}
&\begin{pmatrix}1&0\\-1&0\end{pmatrix}
&&\begin{pmatrix}0&0\\1&0\end{pmatrix}
&&\begin{pmatrix}1&0\end{pmatrix}\\
U_{39}=k^2&\rightleftarrows&k^2&\rightleftarrows&k^2&
\rightleftarrows&k\\
&\begin{pmatrix}0&0\\1&1\end{pmatrix}
&&\begin{pmatrix}1&0\\0&0\end{pmatrix}
&&\begin{pmatrix}0\\1\end{pmatrix}\\
\end{array}
$ \qquad
$U_{40}=U_{39} \text{  reversed}$

\newpage
\subsection{The Lie algebra $\widetilde{\mathfrak{n}}$ with basis indexed by indecomposable modules}
In this section, we will use the labels in section \ref{table} and explicitly list all Lie bracket relations among these generators.

Define $u_i=1_{K_i}$ to be the constructible function where $K_i$ is the orbit of indecomposable $U_i$ for $i=1,2,...,r$. 
Here $r$ is the number of nonisomorphic indecomposable modules.\\
The $(i,j)$-entry of the matrix is the Lie bracket $[u_i,u_j]=u_i\cdot u_j-u_j\cdot u_i$.
We only show the lower triangular part since the matrix satisfies $M^T=-M$.
We only need to compute brackets when the sum of the two dimension vectors is again the dimension vector of an indecomposable module, since otherwise the bracket must be zero.
\\
For $Q=A_1$:\\
There is only one indecomposable module so $\mathcal{A}\cong \mathbb{C}[x]$, where $x=1_{K}$ and $K$ is the orbit of the unique indecomposable module and the Lie algebra is just one dimensional.\\
For $Q=A_2$:\\
\begin{center}
  \begin{tabular}{ c|cccc}
$[u_i,u_j]$ &$u_1$  & $u_2$ & $u_3$ & $u_4$\\ \hline
 $u_1$&  &  &  & \\ 
 $u_2$&$u_4-u_3$  &  &  & \\ 
 $u_3$&0  &0  &  & \\ 
 $u_4$&0  &0  &0  & 
  \end{tabular}
\end{center}
For $Q=A_3$:\\
\begin{center}
  \begin{tabular}{ c|cccccccccccc}
$[u_i,u_j]$ &$u_1$  & $u_2$ & $u_3$ & $u_4$ &$u_5$  & $u_6$ & $u_7$ & $u_8$  &$u_9$  & $u_{10}$ & $u_{11}$ & $u_{12}$ \\ \hline 
$u_1$ &  &  &  &  &  &   &  &  &  &  &  & \\ 
$u_2$ &$u_6-u_4$  &  &  & &  &  &  &  &  &  &  & \\ 
$u_3$ &0  &$u_7-u_5$  &  &  &  &  &  &  &  &  & & \\ 
$u_4$ &0  &0  &$u_8-u_{10}$  &  &  &  &  &  &  &  &  & \\ 
$u_5$ &$u_{11}-u_8$  &0  &0  &$u_{12}$  &  &  &  &  &  &  &  & \\ 
$u_6$ &0  &0  &$u_{11}-u_9$  &0  &0  &  &  &  &  &  &  & \\ 
$u_7$ &$u_9-u_{10}$  &0  &0  &0  &0  &$-u_{12}$  &  &  & &  &  & \\ 
$u_8$ &0  &0  &0  &0  &0  &0  &0  &  &  &  &  & \\ 
$u_9$ &0  &0  &0  &0  &0  &0  &0  &0  &  &  &  & \\ 
$u_{10}$ &0  &$-u_{12}$  &0  &0  &0  &0  &0  &0  &0  & &  & \\ 
$u_{11}$ &0  &$u_{12}$  &0  &0  &0  &0  &0  &0  &0  &0  &  & \\ 
$u_{12}$ &0  &0  &0  &0  &0  &0  &0  &0  &0  &0  &0  &
  \end{tabular}
\end{center}

\newpage
For $Q=A_4$:\\
For the dimension vector reason, $[u_i,u_j]=0$ if $ 21\le i,j\le 40$.  
\begin{center}
  \begin{tabular}{ c|ccccccccccccccc}
 $[u_i,u_j]$&$u_1$  & $u_2$ & $u_3$ & $u_4$ &$u_5$  & $u_6$ & $u_7$ & $u_8$  &$u_9$  & $u_{10}$   \\  \hline 
$u_1$ &  &  &  &  &  &  &  &  &  &   \\ 
$u_2$ &$u_6-u_5$  &  & &  &  &  &  &  &  &    \\ 
$u_3$ &0  &$u_8-u_7$  &  &  &  &  &  &  &  &     \\ 
$u_4$ & 0 &0  & $u_{10}-u_9$  &  &  & &  &  &  &     \\ 
$u_5$ &0  &0  & $u_{11}-u_{13}$ &0  &  &  &  &  &  &    \\ 
$u_6$ &0 &0  & $u_{14}-u_{12}$ &0  &0  &  &  &  &  &    \\ 
$u_7$ & $u_{14}-u_{11}$ &0  &0  &$u_{15}-u_{17}$  &$u_{19}$  &0  &  &  &  &     \\ 
$u_8$ &$u_{12}-u_{13}$  &0  &0  &$u_{18}-u_{16}$  &0  &$-u_{19}$  &0  &  &  &     \\ 
$u_9$ & 0 & $u_{18}-u_{15} $ &0  &0  &$u_{23}-u_{21}$  &$u_{27}-u_{25}$  &$u_{20}$  &0  &  &    \\ 
$u_{10}$ & 0 & $u_{16}-u_{17}$  &0  &0  &$u_{24}-u_{22}$  &$u_{28}-u_{26}$  &0  &$-u_{20}$  &0  &     \\ \hline 
$u_{11}$ &0  & 0 &0  &$u_{21}-u_{22}$  &0  &0  &0  &0  &$-u_{32}$  &0     \\ 
$u_{12}$ & 0 & 0 &0  &$u_{27}-u_{28}$  &0  &0  &0  &0  &0  &$u_{31}$     \\ 
 $u_{13}$& 0 & $-u_{19}$ &0  &$u_{23}-u_{24}$  &0  &0  &0  &0  &0  &$u_{32}$    \\ 
$u_{14}$ & 0 & $u_{19}$ &0  &$u_{25}-u_{26}$  &0  &0  &0  &0  &$-u_{31}$  &0     \\ 
$u_{15}$ & $u_{25}-u_{21}$ & 0 &0  &0  &$u_{29}$ &0  &0  &0  &0  & 0   \\ 
$u_{16}$ & $u_{28}-u_{24}$ & 0 &0  &0  &0  &$-u_{30}$  &0  &0  &0  & 0   \\ 
$u_{17}$ &$u_{26}-u_{22}$  & 0 & $-u_{20}$ &0  &$u_{30}$  &0  &0  &0  &0  & 0   \\ 
$u_{18}$ &$u_{27}-u_{23}$  & 0 &$u_{20}$  &0  &0  &$-u_{29}$  &0  &0  &0  &0    \\ 
$u_{19}$ &0  & 0 &0  &$u_{29}-u_{30}$  &0  &0  &0  &0  &$u_{33}-u_{38}$  &$u_{37}-u_{34}$     \\ 
$u_{20}$ & $u_{31}-u_{32}$ & 0 &0  &0  &$u_{38}-u_{36}$  &$u_{35}-u_{37}$  &0  &0  &0  &0     \\ \hline 
$u_{21}$ &0  &0  &0  &0  &0  &0  &0  &0  &0  &0     \\
$u_{22}$ &  0& 0 &$-u_{32}$  &0  &0  &0  &$-u_{37}$ &$-u_{36}$  &0  &0     \\
$u_{23}$ & 0 & $-u_{29}$ &$u_{32}$  &0  &0  &0  &$u_{33}-u_{36}$  &0  &0  &0     \\
$u_{24}$ & 0 & $-u_{30}$ &0  &0  &0  &0  &$-u_{34}$  &$-u_{38}$  &0  &0     \\
$u_{25}$ & 0 & $u_{29}$  &0  &0  &0  &0  &$u_{37}$  &$u_{33}$  &0  &0     \\
$u_{26}$ &0  & $u_{30}$ &$-u_{31}$  &0  &0  &0  &0  &$u_{35}-u_{34}$  &0  &0     \\
$u_{27}$ & 0 & 0 &$u_{31}$  &0  &0  &0  &$u_{35}$  &$u_{38}$  &0  &0    \\
$u_{28}$ & 0 & 0 &0  &0  &0  &0  &0  &0  &0  &0     \\
$u_{29}$ &0  & 0 &$u_{37}-u_{33}$  &0  &0  &0  &0  &0  &0  &$-u_{40}$     \\
$u_{30}$ & 0 & 0 & $u_{34}-u_{38}$ &0  &0  &0  &0  &0  &$u_{40}$  &0     \\ \hline 
$u_{31}$ & 0 & $u_{38}-u_{35}$ &0  &0  &$-u_{39}$  &0  &0  &0  &0  &0     \\
$u_{32}$ & 0 & $u_{36}-u_{37}$ &0  &0  &0  &$u_{39}$  &0  &0  &0  &0    \\
$u_{33}$ & 0 &0  &0  &$-u_{40}$  &0  &0  &0  &0  &0  &0     \\
$u_{34}$ & 0 & 0 &0  &$u_{40}$  &0  &0  &0  &0  &0  & 0    \\ 
$u_{35}$ & $-u_{39}$  & 0 &0  &0  &0  &0  &0  &0  &0  &0   \\ 
$u_{36}$ & $u_{39}$  & 0 &0  &0  &0  &0  &0  &0  &0  &  0   \\ 
$u_{37}$ &0  & 0 &0  &0  &0  &0  &0  &0  &0  & 0    \\ 
$u_{38}$ &0  & 0 &0  &0  &0  &0  &0  &0  &0  & 0    \\ 
$u_{39}$ &0  & 0 &0  &0  &0  &0  &0  &0  &0  &  0   \\ 
$u_{40}$ &0  & 0 &0  &0  &0  &0  &0  &0  &0  & 0   \\
  \end{tabular}
\end{center}

\begin{center}
  \begin{tabular}{ c|ccccccccccccccc}
 $[u_i,u_j]$&$u_{11}$  & $u_{12}$ & $u_{13}$ & $u_{14}$ &$u_{15}$  & $u_{16}$ & $u_{17}$ & $u_{18}$  &$u_{19}$  & $u_{20}$    \\   \hline 
$u_{11}$ &  &  &  &  &  &  &  &  &  & \\ 
$u_{12}$ &0  &  & &  &  &  &  &  &  & \\ 
 $u_{13}$&0  &0  &  &  &  &  &  & &  & \\ 
$u_{14}$ &0  &0  &0  &  &  &  &  & & & \\ 
$u_{15}$ &$u_{37}$  &0  &$u_{33}$  &0  &  &  &  &  &  & \\ 
$u_{16}$ &0  &$-u_{38}$  &0  &$-u_{34}$  &0  &  &  &  &  & \\ 
$u_{17}$ &0  &$-u_{35}$  &$-u_{34}+u_{36}$  &$-u_{37}$  &0  &0  &  &  &  & \\ 
$u_{18}$ &$u_{36}$ &0  &$u_{38}$  &$-u_{35}+u_{33}$ &0  &0  &0  &  &  & \\ 
$u_{19}$ &0  &0  &0  &0  &0  &0  &0  &0  &  & \\ 
$u_{20}$ &0  &0  &0  &0  &0  &0  &0  &0  &0  & \\ \hline 
$u_{21}$ &0  &0  &0  &0  &0  &0  &0  &0  &0  &0 \\ 
$u_{22}$ &0  &$-u_{39}$  &0  &0  &0  &0  &0  &0  &0  &0 \\ 
 $u_{23}$&0  &0  &0  &$-u_{39}$  &0  &0  &$u_{40}$  &0  &0  &0 \\ 
$u_{24}$ &0  &0  &0  &0  &$-u_{40}$  &0  &0  &0  &0  &0 \\ 
$u_{25}$ &0  &0  &0  &0  &0  &$u_{40}$  &0  &0  &0  &0 \\ 
$u_{26}$ &0  &0  &$u_{39}$  &0  &0  &0  &0  &$-u_{40}$  &0  &0 \\ 
$u_{27}$ &$u_{39}$  &0  &0  &0  &0  &0  &0  &0  &0  &0 \\ 
$u_{28}$ &0  &0  &0  &0  &0  &0  &0  &0  &0  &0 \\ 
$u_{29}$ &0  &0  &0  &0  &0  &0  &0  &0  &0  &0 \\ 
$u_{30}$ &0  &0  &0  &0  &0  &0  &0  &0  &0  &0 \\ 
$u_{31}$ &0  &0  &0  &0  &0  &0  &0  &0  &0  &0 \\ 
$u_{32}$ &0  &0  &0  &0  &0  &0  &0  &0  &0  &0 \\ 
 $u_{33}$&0  &0  &0  &0  &0  &0  &0  &0  &0  &0 \\ 
$u_{34}$ &0  &0  &0  &0  &0  &0  &0  &0  &0  &0 \\ 
$u_{35}$ &0  &0  &0  &0  &0  &0  &0  &0  &0  &0 \\ 
$u_{36}$ &0  &0  &0  &0  &0  &0  &0  &0  &0  &0 \\ 
$u_{37}$ &0  &0  &0  &0  &0  &0  &0  &0  &0  &0 \\ 
$u_{38}$ &0  &0  &0  &0  &0  &0  &0  &0  &0  &0 \\ 
$u_{39}$ &0  &0  &0  &0  &0  &0  &0  &0  &0  &0 \\ 
$u_{40}$ &0  &0  &0  &0  &0  &0  &0  &0  &0  &0 \\ 
  \end{tabular}
\end{center}

\subsection{Lower and upper central series of the Lie algebra $\widetilde{\mathfrak{n}}$}
In this section we will show that $\widetilde{\mathfrak{n}}$ is nilpotent by computing its lower central series. Since we already know all the Lie brackets of $\widetilde{\mathfrak{n}}$ in terms of $u_i$, we can iteratively compute the series. Recall the definition of the \textbf{ lower central series} of a Lie algebra $\mathfrak{n}$: $\mathfrak{n}_{i+1}=[\mathfrak{n},\mathfrak{n}_{i}]$ and $\mathfrak{n}_0=\mathfrak{n}$.
Define generalized center of an ideal $\mathfrak{I}\subset \mathfrak{n}$ as $GC(\mathfrak{I}):=\{x\in \mathfrak{n}: [x,y]\in \mathfrak{I}, \forall y\in \mathfrak{n}  \}$. 
Then we can define the \textbf{upper central series} as follows: $\mathfrak{n}^{0}=0$, $\mathfrak{n}^{i+1}=GC(\mathfrak{n}^{i})$.
A Lie algebra $\mathfrak{n}$ is \textbf{nilpotent} if there is an integer $N$ such that $\mathfrak{n}_N=0$. 
The positive part $(\mathfrak{sl}_{n+1})_+$ of $\mathfrak{sl}_{n+1}$ is naturally a sub Lie algebra of $\widetilde{\mathfrak{n}}$ and the lower central series of $\frak{sl}_{n+1}$ in terms of basis elements $u_i$ is also computed. 
\\
For $Q=A_2$:\\
Lower central series:\\
\begin{align*}
\widetilde{\mathfrak{n}}_{1}&=\text{span}\{u_{3}-u_{4}\}, \\
\widetilde{\mathfrak{n}}_{2}&=0.
\end{align*}
\begin{center}
  \begin{tabular}{ |c|ccc|}
  \hline
  & $\widetilde{\mathfrak{n}}_0$ & $\widetilde{\mathfrak{n}}_1$ &$\widetilde{\mathfrak{n}}_2$  \\   \hline 
dim & 4 &1 & 0 \\ \hline
  \end{tabular}
\end{center}
Upper central series:\\
\begin{align*}
\widetilde{\mathfrak{n}}^{1}&=\text{span}\{u_{3},u_{4}\}, \\
\widetilde{\mathfrak{n}}^{2}&=\widetilde{\mathfrak{n}}.
\end{align*}
\begin{center}
  \begin{tabular}{ |c|ccc|}
  \hline
  & $\widetilde{\mathfrak{n}}^{0}$ & $\widetilde{\mathfrak{n}}^{1}$ &$\widetilde{\mathfrak{n}}^{2}$  \\   \hline 
dim & 0 &2 & 4 \\ \hline
  \end{tabular}
\end{center}
$(\mathfrak{sl}_3)_+$ as a sub Lie algebra:
\begin{align*}
((\mathfrak{sl}_3)_+)_{0}&=\text{span}\{u_{1},u_{2}\},\\
((\mathfrak{sl}_3)_+)_{1}&=\text{span}\{u_{3}-u_{4}\},\\
((\mathfrak{sl}_3)_+)_{2}&=0.
\end{align*}
For $Q=A_3$:\\
Lower central series:\\
\begin{align*}
\widetilde{\mathfrak{n}}_{1}&=\text{span}\{-u_{4}+u_{6},-u_{8}+u_{11},u_{9}-u_{10},-u_{5}+u_{7},-u_{12},u_{8}-u_{10}\}, \\
\widetilde{\mathfrak{n}}_{2}&=\text{span}\{u_{8}+u_{9}-u_{10}-u_{11},u_{12}\}, \\
\widetilde{\mathfrak{n}}_{3}&=0.
\end{align*}

\begin{center}
  \begin{tabular}{ |c|cccc|}
  \hline
  & $\widetilde{\mathfrak{n}}_0$ & $\widetilde{\mathfrak{n}}_1$ &$\widetilde{\mathfrak{n}}_2$  &$\widetilde{\mathfrak{n}}_3$\\   \hline 
dim & 12 &6 & 2  &0 \\ \hline
  \end{tabular}
\end{center}
Upper central series:\\
\begin{align*}
\widetilde{\mathfrak{n}}^{1}&=\text{span}\{u_{8},u_{9},u_{10}+u_{11},u_{12}\}, \\
\widetilde{\mathfrak{n}}^{2}&=\text{span}\{-u_4+u_6, -u_5+u_7, u_8,u_9,u_{10},u_{11},u_{12}\}, \\
\widetilde{\mathfrak{n}}^{3}&=\widetilde{\mathfrak{n}}.
\end{align*}

\begin{center}
  \begin{tabular}{ |c|cccc|}
  \hline
  & $\widetilde{\mathfrak{n}}^{0}$ & $\widetilde{\mathfrak{n}}^{1}$ &$\widetilde{\mathfrak{n}}^{2}$  &$\widetilde{\mathfrak{n}}^{3}$\\   \hline 
dim & 0 &4 & 7  & 12 \\ \hline
  \end{tabular}
\end{center}
$(\mathfrak{sl}_4)_+$ as a sub Lie algebra:
\begin{align*}
((\mathfrak{sl}_4)_+)_{0}&=\text{span}\{u_{1},u_{2},u_{3}\}\\
((\mathfrak{sl}_4)_+)_{1}&=\text{span}\{-u_{4}+u_{6},-u_{5}+u_{7}\}\\
((\mathfrak{sl}_4)_+)_{2}&=\text{span}\{u_{8}+u_{9}-u_{10}-u_{11}\}\\
((\mathfrak{sl}_4)_+)_{3}&=0
\end{align*}
For $Q=A_4$:\\
Lower central series:\\
\begin{align*}
\widetilde{\mathfrak{n}}_{1}&=\text{span}\{u_{5}-u_{6},u_{11}-u_{14},u_{13}-u_{12},u_{21}-u_{25},u_{24}-u_{28},u_{22}-u_{26},u_{23}-u_{27},u_{32}-u_{31},u_{39},u_{7}-u_{8},u_{15}-u_{18},\\
&u_{17}-u_{16},u_{19},u_{29},u_{30},u_{35}-u_{38},u_{36}-u_{37},u_{9}-u_{10},u_{11}-u_{13},u_{20},u_{32},u_{33}-u_{37},u_{34}-u_{38},u_{15}-u_{17},\\
&u_{21}-u_{22},u_{27}-u_{28},u_{40},u_{21}-u_{23},u_{36}-u_{38},u_{37}\}\\
\widetilde{\mathfrak{n}}_{2}&=\text{span}\{u_{11}+u_{12}-u_{13}-u_{14},u_{19},u_{21}-u_{23}-u_{25}+u_{27},u_{22}-u_{24}-u_{26}+u_{28},u_{29},u_{30},\\
&u_{35}+u_{36}-u_{37}-u_{38},u_{39},u_{21}-u_{22}-u_{25}+u_{26},u_{31}-u_{32},u_{37},u_{34},u_{33}-u_{35}-u_{36},u_{33},\\&u_{40},u_{15}+u_{16}-u_{17}-u_{18},u_{20},u_{36}-u_{37},-u_{21}+u_{22}+u_{23}-u_{24},u_{32}\}\\
\widetilde{\mathfrak{n}}_{3}&=\text{span}\{u_{21}-u_{22}-u_{23}+u_{23}-u_{25}+u_{26}-u_{27}+u_{28},u_{31}-u_{32},\\
&u_{33}-u_{37},u_{34}-u_{38},-u_{34}+u_{35}+u_{36}-u_{37},u_{39},u_{29}-u_{30},u_{33}-u_{38},u_{40},u_{36}-u_{37}\}\\
\widetilde{\mathfrak{n}}_{4}&=\text{span}\{-u_{33}-u_{34}+u_{35}+u_{36},u_{39},u_{40},-u_{35}-u_{36}+u_{37}+u_{38}\}\\
\widetilde{\mathfrak{n}}_{5}&=0\\
\end{align*}
\begin{center}
  \begin{tabular}{ |c|cccccc|}
  \hline
  & $\widetilde{\mathfrak{n}}_0$ & $\widetilde{\mathfrak{n}}_1$ &$\widetilde{\mathfrak{n}}_2$  &$\widetilde{\mathfrak{n}}_3$ &$\widetilde{\mathfrak{n}}_4$ &$\widetilde{\mathfrak{n}}_5$ \\   \hline 
dim & 40 &30 & 20  &10 & 4  &0 \\\hline
  \end{tabular}
\end{center}
Upper central series:\\
\begin{align*}
\widetilde{\mathfrak{n}}^{1}&=\text{span}\{u_{40},u_{39},u_{38},u_{37},u_{35}+u_{36},u_{33}+u_{34},u_{28},u_{21}\},\\
\widetilde{\mathfrak{n}}^{2}&=\text{span}\{u_{40},u_{39},u_{38},u_{37},u_{36},u_{35},u_{34},u_{33},u_{31}-u_{32},u_{29}-u_{30},u_{28},-u_{22}-u_{23}+u_{24}-u_{25}+u_{26}+u_{27},u_{21}\},\\
\widetilde{\mathfrak{n}}^{3}&=\text{span}\{u_{40},u_{20},u_{19},-u_{15}-u_{16}+u_{17}+u_{18},-u_{11}-u_{12}+u_{13}+u_{14},u_{39},u_{38},u_{37},u_{36},u_{35},u_{34},\\
&u_{33},u_{32},u_{31},u_{30},u_{29},u_{28},u_{22}+u_{27},u_{23}+u_{26},u_{22}+u_{23}+u_{25},-u_{22}-u_{23}+u_{24},u_{21}\},\\
\widetilde{\mathfrak{n}}^{4}&=\text{span}\{u_{19},u_{20},u_{21},u_{22},u_{23},u_{24},u_{25},u_{26},u_{27},u_{28},u_{29},u_{30},u_{31},u_{32},u_{33},u_{34},u_{35},u_{36},u_{37},u_{38},u_{39},u_{40}
,\\
&u_{15}-u_{17},u_{15}-u_{16},u_{11}-u_{14},u_{11}-u_{13},u_{11}-u_{12},u_{9}-u_{10},u_{7}-u_{8},u_{5}-u_{6},u_{15}-u_{18}\},\\
\widetilde{\mathfrak{n}}^{5}&=\widetilde{\mathfrak{n}}\\
\end{align*}
\begin{center}
  \begin{tabular}{ |c|cccccc|}
  \hline
  & $\widetilde{\mathfrak{n}}^0$ & $\widetilde{\mathfrak{n}}^1$ &$\widetilde{\mathfrak{n}}^2$  &$\widetilde{\mathfrak{n}}^3$ &$\widetilde{\mathfrak{n}}^4$ &$\widetilde{\mathfrak{n}}^5$ \\   \hline 
dim & 0 &8 & 13  &22 & 31   & 40 \\\hline
  \end{tabular}
\end{center}
$(\mathfrak{sl}_5)_+$ as a sub Lie algebra:
\begin{align*}
((\mathfrak{sl}_5)_+)_{0}&=\text{span}\{u_{1},u_{2},u_{3},u_{4}\}\\
((\mathfrak{sl}_5)_+)_{1}&=\text{span}\{-u_{5}+u_{6},-u_{7}+u_{8},-u_{9}+u_{10}\}\\
((\mathfrak{sl}_5)_+)_{2}&=\text{span}\{u_{11}+u_{12}-u_{13}-u_{14},u_{15}+u_{16}-u_{17}-u_{18}\}\\
((\mathfrak{sl}_5)_+)_{3}&=\text{span}\{-u_{21}+u_{22}+u_{23}-u_{24}+u_{25}-u_{26}-u_{27}+u_{28}\}\\
((\mathfrak{sl}_5)_+)_{4}&=0
\end{align*}

\subsection{Lie algebra cohomology of $\widetilde{\mathfrak{n}}$}
In this section, we will compute the Lie algebra cohomology of $\widetilde{\mathfrak{n}}$.  
The Lie algebra cohomology is graded by cohomological dimension and the the dimension vector of $\Lambda$-module. 
The dimension of each graded piece will be given.
Since $\widetilde{\mathfrak{n}}$ is nilpotent, the Lie algebra cohomology will satisfy Poincare duality.
The Lie algebra cohomology for the case $Q=A_4$ is too big to calculate and is not given here.
\\
For $Q=A_2$:\\
\begin{center}
  \begin{tabular}{ |c|ccccc|}
  \hline
  & $H^0$ & $H^1$ &$H^2$  &$H^3$ &$H^4$  \\   \hline 
dim & 1 &3 & 4  &3 & 1    \\\hline
  \end{tabular}
\end{center}
\begin{figure}[ht!]
\centering
\includegraphics[width=90mm]{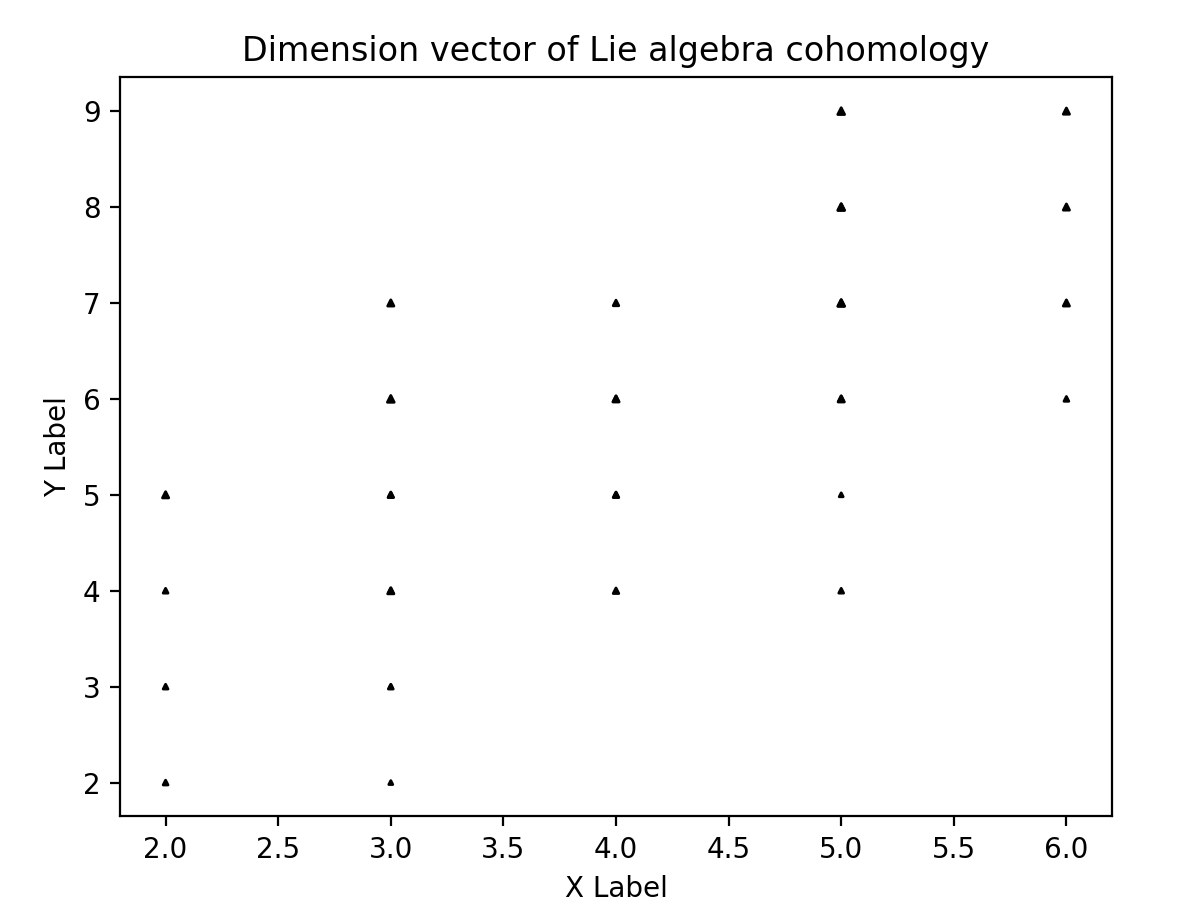}
\caption{  \label{overflow}}
\end{figure}
For $Q=A_3$:\\
\begin{center}
  \begin{tabular}{ |c|ccccccccccccccc|}
  \hline
  & $H^0$ & $H^1$ &$H^2$  &$H^3$ &$H^4$& $H^5$ & $H^6$ &$H^7$  &$H^8$ &$H^9$& $H^{10}$ & $H^{11}$ &$H^{12}$  \\  \hline 
dim & 1&6&20&47&85&121&136&121&85&47&20&6&1      \\\hline
  \end{tabular}
\end{center}
\begin{figure}[ht!]
\centering
\includegraphics[width=120mm]{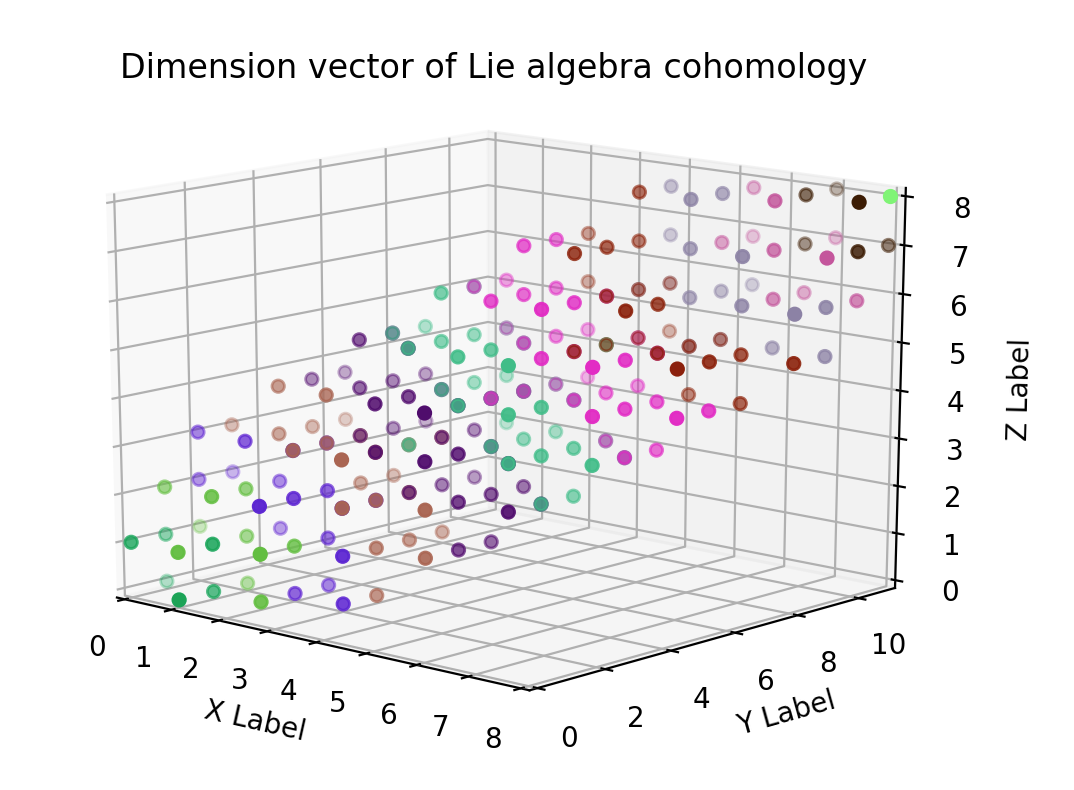}
\caption{  \label{overflow}}
\end{figure}


\begin{thebibliography}{9}

 
\bibitem{GLS05}
Christof Geiss, Bernard Leclerc, Jan Schr\"oer.
\textit{Semicanonical bases and preprojective algebras}.
\textit{Annales Scientifiques de l'Ecole Normale Superieure
}.Volume 38, Issue 2, March-April 2005, P.193-253 
 
 \bibitem{Lu91}
G.Lusztig. 
\textit{Quivers, perverse sheaves, and quantized enveloping algebras}.
\textit{Journal of the American Mathematical society
}.Volume 4, Number 2, April 1991,  P.365-421


 \bibitem{MM65}	
J.Milnor, J. Moore.
\textit{On the structure of Hopf algebras}.
\textit{Ann. of Math. (2) 
}.81 1965 P.211-264. 
55.40 (55.34) 

 \bibitem{BB73}
A. Bia\l ynicki-Birula,
\textit{On fixed point schemes of actions of multiplicative and additive groups}.
\textit{Topology 
}.12 (1973), P.99-103




\end{thebibliography}
\end{document}